\def\today{\number\day\space\ifcase\month\or   January\or February\or
   March\or April\or May\or June\or   July\or August\or September\or
   October\or November\or December\fi\   \number\year}
\theoremstyle{definition}
\newtheorem{lma}{Lemma}[section]
\newaliascnt{thmCt}{lma}
\newtheorem{thm}[thmCt]{Theorem}
\newaliascnt{corCt}{lma}
\newtheorem{cor}[corCt]{Corollary}
\newaliascnt{propCt}{lma}
\newtheorem{prop}[propCt]{Proposition}
\newtheorem*{thm*}{Theorem}
\newtheorem*{cor*}{Corollary}
\newtheorem*{prop*}{Proposition}
\newcounter{theoremintro}
\newtheorem{thmintro}[theoremintro]{Theorem}
\newtheorem{corintro}[theoremintro]{Corollary}
\newaliascnt{pgrCt}{lma}
\newaliascnt{dfCt}{lma}
\newtheorem{df}[dfCt]{Definition}
\newaliascnt{remCt}{lma}
\newtheorem{rem}[remCt]{Remark}
\newaliascnt{remsCt}{lma}
\newaliascnt{egCt}{lma}
\newtheorem{eg}[egCt]{Example}
\newaliascnt{egsCt}{lma}
\newaliascnt{qstCt}{lma}
\newaliascnt{pbmCt}{lma}
\newaliascnt{notaCt}{lma}
\newcommand{\beq}{\begin{equation}}
\newcommand{\eeq}{\end{equation}}
\newcommand{\beqa}{\begin{eqnarray*}}
\newcommand{\eeqa}{\end{eqnarray*}}
\newcommand{\bal}{\begin{align*}}
\newcommand{\eal}{\end{align*}}
\newcommand{\bi}{\begin{itemize}}
\newcommand{\ei}{\end{itemize}}
\newcommand{\be}{\begin{enumerate}}
\newcommand{\ee}{\end{enumerate}}
\newcommand{\ep}{\varepsilon}
\newcommand{\Z}{{\mathbb{Z}}}
\newcommand{\C}{{\mathbb{C}}}
\newcommand{\N}{{\mathbb{N}}}
\newcommand{\T}{{\mathbb{T}}}
\newcommand{\X}{{\mathsf{X}}}
\newcommand{\Oo}{{\mathcal{O}}}
\newcommand{\dist}{d}
\newcommand{\Aut}{{\mathrm{Aut}}}
\keywords{Nekrashevych $C^*$-algebras, self-similar groups, simplicity}
\subjclass[2020]{46L99, 20F65, 20F10}
\title{Simplicity of $C^*$-algebras of contracting self-similar groups}
\date{January 20, 2025}
\thanks{
The first named author was supported by a starting grant of
the Swedish Research Council. The second named author was supported by the NSF grant DMS2204379.  The third named author was supported by a Simons Foundation Collaboration Grant, award number 849561, and the Australian Research Council Grant DP230103184.
}
\author[Gardella]{Eusebio Gardella}
\address[E.~Gardella]
{Department of Mathematical Sciences, Chalmers University of
Technology and University of Gothenburg, Gothenburg SE-412 96, Sweden.}
\email{gardella@chalmers.se}
\urladdr{www.math.chalmers.se/~gardella}
\author[Nekrashevych]{Volodymyr Nekrashevych}
\address[V.~Nekrashevych]
{Department of Mathematics\\
Texas A\& M University\\
College Station, TX 77843-3368}
\email{nekrash@tamu.edu}
\urladdr{people.tamu.edu/~nekrash/}
\author[Steinberg]{Benjamin Steinberg}
\address[B.~Steinberg]{%
    Department of Mathematics\\
    City College of New York\\
    Convent Avenue at 138th Street\\
    New York, New York 10031\\
    USA}
\email{bsteinberg@ccny.cuny.edu}
\author[Vdovina]{Alina Vdovina}
\address[A.~Vdovina]{%
    Department of Mathematics\\
    City College of New York\\
    Convent Avenue at 138th Street\\
    New York, New York 10031\\
    USA}
\email{avdovina@ccny.cuny.edu}
\begin{document}

\begin{abstract}
We show that the $C^*$-algebra associated by Nekrashevych to a contracting self-similar group is simple if and only if the corresponding complex $\ast$-algebra is simple.  We also improve on Steinberg and Szaka\'c's algorithm to determine if the $\ast$-algebra is simple.   This provides an interesting class of non-Hausdorff amenable, effective and minimal ample groupoids for which simplicity of the $C^*$-algebra and the complex $\ast$-algebra are equivalent.
\end{abstract}

\maketitle

\section{Introduction}
It has been known for quite some time that if $\mathcal G$ is a Hausdorff amenable ample groupoid, then $C^*(\mathcal G)$ is simple if and only if $\mathcal G$ is minimal and effective, if and only if the algebra $K\mathcal G$ is simple over any (some) field $K$; see~\cite{BCFS14,groupoidprimitive,CEM15}.  If $\mathcal G$ is non-Hausdorff, then minimality and effectiveness are still necessary conditions, but things begin to break apart.  In particular, simplicity of $K\mathcal G$ can depend on the characteristic of $K$~\cite{nonhausdorffsimple,simplicity}, and it is possible for $\C \mathcal G$ and $C^*(\mathcal G)$ to not be simple.  It was observed in~\cite{Nekrashevychgpd} in the algebraic setting and in~\cite{nonhausdorffsimple} for both the algebraic and $C^*$-setting that the obstruction to simplicity is controlled by an ideal that is nowadays called the essential ideal~\cite{KM21,simplicity} (see also~\cite[Section~4]{Nek:jot}).  The algebraic essential ideal of the complex algebra of an ample groupoid is always contained in the essential ideal of the reduced $C^*$-algebra of the groupoid.  However, it is an open question whether the algebraic essential ideal is dense in the essential ideal, and it is also open whether $C^*_r(\mathcal G)$ is simple if and only if $\C\mathcal G$ is simple for a minimal effective ample groupoid.

The second author~\cite{Nek:jot,Nekcstar} introduced a $C^*$-algebra associated to a self-similar group, which moreover has an ample groupoid model which is minimal and effective for any self-similar group. In the case that the self-similar group is contracting, the second author showed that the groupoid is amenable and second countable. On the other hand, the groupoid associated to a self-similar group is seldom Hausdorff.  Thus these groupoids form an interesting test case for understanding simplicity phenomena for non-Hausdorff groupoids.

The most famous contracting self-similar group is Grigorchuk's  finitely generated torsion group of intermediate growth~\cite{GrigPeriodic,GrigGrowth}. The second author showed in~\cite{Nekrashevychgpd} that the algebra of the associated ample groupoid is not simple over fields of characteristic $2$, but in~\cite{nonhausdorffsimple} it was shown that the algebra of this groupoid is simple over fields of all other characteristics and the $C^*$-algebra of the groupoid is simple.  In~\cite{SS23}, the third author and Szaka\'cs provided an algorithm which on input the nucleus of a contracting self-similar group, outputs all characteristics of fields for which the algebra is not simple.  They introduced a class of contracting self-similar groups, called multispinal groups, that generalizes the Grigorchuk group and \v{S}uni\'c groups~\cite{sunicgroups}.  For this class they give an easy to verify criterion for simplicity of its algebra over any field.  Yoshida~\cite{2021arXiv210202199Y} gave a criterion for simplicity of the $C^*$-algebra of a multispinal group that coincides with the criterion in~\cite{SS23} for simplicity of the complex $*$-algebra.

In this paper, we prove that the $C^*$-algebra associated to a contracting self-similar group is simple if and only if the complex $*$-algebra is simple.  We also improve on the efficiency of the algorithm in~\cite{SS23} by showing that if the algebraic essential ideal is nonzero, then there is an element in the ideal supported on a particular type of finite subgroup of the nucleus.  This allows us to obtain an exponential upper bound on the complexity of the algorithm.  The algorithm  in~\cite{SS23} is bounded in terms of Bell numbers.

\subsection*{Acknowledgments}
This work was initiated while the authors were participating in the AIM workshop ``Groups of dynamical origin'', in June 2024. We thank the organizers of the workshop and AIM for the stimulating environment.

\section{Ample groupoids and associated algebras}

Let us review classical notations of groupoid theory.
A groupoid $\mathcal{G}$ is a small category of isomorphisms. We identify objects of the category with their identity automorphisms, and call them \emph{units}. For an element $g\in\mathcal{G}$, the \emph{source} and \emph{range} of $g$ are then $\mathsf{s}(g)=g^{-1}g$ and $\mathsf{r}(g)=gg^{-1}$, and a product $g_1g_2$ of elements of $\mathcal{G}$ is defined if and only if $\mathsf{r}(g_2)=\mathsf{s}(g_1)$.
We denote the set of units of $\mathcal{G}$ by $\mathcal{G}^{(0)}$.

We also use the notation $\mathcal{G}_x=\mathsf{s}^{-1}(x)$, $\mathcal{G}^x=\mathsf{r}^{-1}(x)$ and $\mathcal{G}_x^y=\mathcal{G}_x\cap\mathcal{G}^y$ for units $x, y\in\mathcal{G}^{(0)}$.

A \emph{topological groupoid} is a groupoid with a topology on it such that the multiplication and taking inverse are continuous maps.

A \emph{bisection} is a subset $A\subset\mathcal{G}$ such that the maps $\mathsf{s}\colon A\to\mathsf{s}(A)$ and $\mathsf{r}\colon A\to\mathsf{r}(A)$ are homeomorphisms. A groupoid is said to be \'etale if $\mathsf{s}, \mathsf{r}$ are open and it has a basis for its topology consisting of open bisections. (Equivalently, if both maps $\mathsf{s}, \mathsf{r}\colon \mathcal{G}\to\mathcal{G}^{(0)}$ are local homeomorphisms.)

If $\mathcal{G}$ is \'etale and $C\subset\mathcal{G}$ is compact, then for every unit $x$ the sets $C\cap\mathcal{G}_x$ and $C\cap\mathcal{G}^x$ are finite, since $C$ can be covered by finitely many bisections.

In this paper, all \'etale groupoids have locally compact Hausdorff unit spaces, but need not themselves be Hausdorff.

A groupoid $\mathcal G$ is said to be
\emph{ample} if it has a basis for its topology consisting of compact open bisections. (Equivalently, if it is \'etale and its unit space is totally disconnected.) Note that compact subsets of $\mathcal G$ need not be closed, since we do not assume that $\mathcal G$ is Hausdorff.

Let $\Bbbk$ be a field. For a (not necessarily continuous) map $f\colon \mathcal{G}\to\Bbbk$, the \emph{strict support} of $f$ is the set $f^{-1}(\Bbbk\setminus\{0\})$. We say that $f$ is \emph{compactly supported} if its strict support is contained in a compact subset of $\mathcal{G}$.

Consider two maps $f_1, f_2\colon \mathcal{G}\to\Bbbk$. Their \emph{convolution} $f_1f_2$ is defined by the formula
\[f_1f_2(g)=\sum_{g_1g_2=g}f_1(g_1)f_2(g_2).\]
If one of the maps $f_i$ is compactly supported, then the sum on the right-hand side is finite for every $g\in\mathcal{G}$, since $\mathsf{s}(g_2)=\mathsf{s}(g)$ and $\mathsf{r}(g_1)=\mathsf{r}(g)$, so there are finitely many elements $g_i$ for which the value $f_i(g_i)$ is non-zero.

In particular, the set of compactly supported functions on $\mathcal{G}$ is a $\Bbbk$-algebra with respect to the convolution operation.

\begin{df}
Let $\mathcal G$ be an \'etale groupoid.
\begin{enumerate}
\item Following Connes~\cite{Connes}, denote by $C_c(\mathcal G)$ the $\C$-algebra spanned by functions $f\colon \mathcal G\to \C$ such that there is a compact open Hausdorff subset $U\subset \mathcal G$ (which may be taken to be an open bisection) such that $f|_U$ is continuous with compact support and $f|_{\mathcal G\setminus U}=0$.
\item Let $\mathcal G$ be ample and $\Bbbk$ a field.
The algebra $\Bbbk\mathcal{G}$ is the $\Bbbk$-algebra generated by the indicator functions $1_F$ of compact open bisections $F\subset\mathcal{G}$.
\end{enumerate}
\end{df}

If $F_1, F_2$ are compact open bisections, then $F_1F_2$ is also a compact open bisection and $1_{F_1}1_{F_2}=1_{F_1F_2}$. Consequently, $\Bbbk\mathcal{G}$ is, in fact, the $\Bbbk$-linear span of the set of indicator functions of compact open bisections.

If $\mathcal{G}$ is Hausdorff, then $\Bbbk\mathcal{G}$ coincides with the algebra of all continuous (equivalently, locally constant) compactly supported functions $\mathcal{G}\to\Bbbk$, where $\Bbbk$ has discrete topology.

The algebras $\C\mathcal{G}$ and $C_c(\mathcal G)$ have the natural structure of a complex $*$-algebra with respect to the operation $f^*(g)=\overline{f(g^{-1})}$.

If $f_2\colon \mathcal{G}\to\Bbbk$ is supported on $\mathcal{G}_x$, then for every compactly supported $f_1\colon \mathcal{G}\to\Bbbk$, the function $f_1f_2$ is defined and is also supported on $\mathcal{G}_x$.

Let $x\in\mathcal{G}^{(0)}$, and consider the space $\ell^2(\mathcal{G}_x)$ of $\ell^2$-summable functions $f\colon \mathcal{G}_x\to\C$.
For an \'etale groupoid, one has that the $\ast$-representation $\lambda_x$ of $C_c(\mathcal G)$ on $\ell^2(\mathcal G_x)$ given by left multiplication (as above with $\Bbbk=\C$) is bounded since if $f$ is continuous with compact support on an open  bisection $U$, then $\|\lambda_x(f)\|\leq \|f\|_{\infty}$.  We denote by $\lambda$ the direct sum of all representations $\lambda_x$ for $x\in\mathcal{G}^{(0)}$.

\begin{df}
Let $\mathcal G$ be an  \'etale groupoid.
The \emph{reduced $C^*$-algebra} $C_r^*(\mathcal{G})$ is the completion of $C_c(\mathcal{G})$ with respect to the norm $\|f\|=\|\lambda(f)\|=\sup_{x\in\mathcal{G}^{(0)}}\|\lambda_x(f)\|$.
\end{df}

Suppose that $\mathcal G$ is ample.    Note that if
$\delta_h$ is the delta-function for the element $h\in\mathcal{G}_x$, and $F$ is a open compact $\mathcal{G}$-bisection, then $1_F\delta_h(g)=1$ if and only if $gh^{-1}\in F$, which is equivalent to $g=Fh$. Consequently, $1_F\delta_h=\delta_{Fh}$.
Therefore, multiplication by $1_F$ is a partial isometry of $\ell^2(\mathcal{G}_x)$. It follows that multiplication by elements of $\C\mathcal{G}$ from the left is a bounded $*$-representation of $\C\mathcal{G}$ on $\ell^2(\mathcal{G}_x)$. Let us denote this representation by $\lambda_x$. We denote by $\lambda$ the direct sum of all representations $\lambda_x$ for $x\in\mathcal{G}^{(0)}$.  Notice that $\lambda$ is the restriction of the corresponding representation $\lambda$ of $C_c(\mathcal G)$.   It is proved in~\cite{CZ24} that $C^*_r(\mathcal G)$ is the completion of $\C\mathcal G$ with respect to the norm $\|a\| = \|\lambda(a)\|=\sup_{x\in\mathcal{G}^{(0)}}\|\lambda_x(a)\|$, that is, $\C\mathcal G$ embeds densely and isometrically into $C_c(\mathcal G)$ with respect to the reduced norms.

For an \'etale groupoid $\mathcal{G}$, we regard elements in $C_r^*(\mathcal{G})$ as bounded functions on $\mathcal{G}$ by $a(\gamma) = \langle \lambda_x(a)\delta_x,\delta_\gamma\rangle$ where $x=\mathsf{s}(\gamma)$. This identification is sometimes called the \emph{$j$-map}, see~\cite[Proposition~II.4.2]{Renault}.  Note that $\|a\|_{\infty}\leq \|a\|$, with respect to this identification,  and, for elements of $C_c(\mathcal G)$, we obtain the original function.  If $\mathcal G$ is Hausdorff, then these functions belong to $C_0(\mathcal G)$, but  for \'etale groupoids which are not Hausdorff, there may exist non-continuous elements in $C^*_r(\mathcal{G})$ whose strict support, when regarded as functions on $\mathcal{G}$, has empty interior.
These elements are used to define what is called the
essential ideal (under the assumption that $\mathcal G$ is second countable).

\begin{df}\label{df:EssIdeal}
Let $\mathcal{G}$ be a second countable \'etale groupoid.
The \emph{essential ideal} in $C_r^*(\mathcal{G})$ is the ideal
$I_{\mathrm{ess}}(\mathcal{G})$ consisting of all elements in $C_r^*(\mathcal{G})$ whose
strict support has empty interior in $\mathcal{G}$.

If $\mathcal G$ is ample, we set $I_{\mathrm{ess}}^{\mathrm{alg}}(\mathcal{G})=I_{\mathrm{ess}}(\mathcal{G})\cap \mathbb C\mathcal{G}$, and call it the \emph{algebraic essential ideal}.
\end{df}

For an effective, minimal and second countable ample groupoid $\mathcal G$, one has that $C^*_r(\mathcal G)/I_{\mathrm{ess}}(\mathcal{G})$ is simple (\cite{KM21}) and $\C\mathcal G/I_{\mathrm{ess}}^{\mathrm{alg}}(\mathcal{G})$ is the unique simple quotient of $\C\mathcal G$ (\cite{Nekgrpd,simplicity}).  Thus $C^*_r(\mathcal G)$ is simple if and only if $I_{\mathrm{ess}}(\mathcal{G})=0$ and $\C\mathcal G$ is simple if and only if $I_{\mathrm{ess}}^{\mathrm{alg}}(\mathcal{G})=0$; see~\cite{nonhausdorffsimple}.  The case of $\C \mathcal G$ does not need second countability~\cite{simplicity}.  We remark that for \'etale groupoids that are not second countable, the definition of the essential ideal is more subtle~\cite{KM21}.

\begin{rem}
Although $I_{\mathrm{ess}}^{\mathrm{alg}}(\mathcal{G})=I_{\mathrm{ess}}(\mathcal{G})\cap \C \mathcal{G}$ by definition, it is not clear whether $I_{\mathrm{ess}}^{\mathrm{alg}}(\mathcal{G})$ is dense in $I_{\mathrm{ess}}(\mathcal{G})$. Indeed, it is conceivable that we may have
$I_{\mathrm{ess}}^{\mathrm{alg}}(\mathcal{G})=\{0\}$ while
$I_{\mathrm{ess}}(\mathcal{G})\neq \{0\}$.
\end{rem}

\section{Graded \'etale groupoids}
Let $\mathcal{G}$ be an \'etale groupoid and let
$c\colon \mathcal{G}\to \Z$ be a continuous, i.e., locally constant, cocycle.
Given $n\in\Z$, we set $\mathcal{G}_n=c^{-1}(\{n\})$,
and note that $\mathcal{G}=\bigsqcup_{n\in\Z} \mathcal{G}_n$ is a clopen partition of $\mathcal{G}$, which we call a \emph{grading} on $\mathcal{G}$. In this setting, we say that $(\mathcal{G},c)$ is a \emph{graded groupoid}. Note that
$\mathcal{G}_0$ is itself an \'etale, locally compact groupoid.

Gradings can be used to define natural circle actions on the associated groupoid C*-algebra:

\begin{df}\label{df:Gauge}
Let $(\mathcal{G},c)$ be a graded \'etale groupoid.
Given $z\in\T$, $n\in\Z$ and $f\in C_c(\mathcal{G}_n)$, we set $T_z(f)(\gamma)=z^nf(\gamma)$ for all $\gamma\in \mathcal{G}_n$. Since  $C_c(\mathcal{G})=\bigoplus_{n\in \Z} C_c(\mathcal{G}_n)$, the above formula induces a well-defined map $T_z\colon C_c(\mathcal{G})\to C_c(\mathcal{G})$, which is easily seen to be a $\ast$-algebra isomorphism.

We also write $T_z\in \mathrm{Aut}(C^*(\mathcal{G}))$ for
the extension to the universal C*-algebra of $\mathcal{G}$.
It is immediate to check that $z\mapsto T_z$ is a continuous
group action, which we call the \emph{gauge action} on $C^*(\mathcal{G})$.
\end{df}

\begin{lma}\label{lma:Gauge}
Let $\mathcal{G}$ be a graded groupoid with cocycle $c$,
let $a\in C^*(\mathcal{G})$, let $x\in\mathcal{G}^{(0)}$, let $\gamma_1,\gamma_2\in \mathcal{G}x$
and let $z\in\T$. Then
\[\langle \lambda_x(T_z(a))\delta_{\gamma_1},\delta_{\gamma_2}\rangle = z^{c(\gamma_2)-c(\gamma_1)}
 \langle \lambda_x(a)\delta_{\gamma_1},\delta_{\gamma_2}\rangle
\]
for all $\gamma_1,\gamma_2\in\mathcal{G}x$.
\end{lma}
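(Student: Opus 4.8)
The plan is to reduce to the generators of $C_c(\mathcal{G})$ and then compute the matrix coefficient directly. Since $\lambda_x$ is a $\ast$-representation of $C^*(\mathcal{G})$, it is contractive, and $\delta_{\gamma_1},\delta_{\gamma_2}$ are unit vectors in $\ell^2(\mathcal{G}_x)$, so $a\mapsto\langle\lambda_x(a)\delta_{\gamma_1},\delta_{\gamma_2}\rangle$ is norm-continuous; as $T_z$ is a $\ast$-automorphism of $C^*(\mathcal{G})$, it is isometric, so $a\mapsto\langle\lambda_x(T_z(a))\delta_{\gamma_1},\delta_{\gamma_2}\rangle$ is norm-continuous too. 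Because $C_c(\mathcal{G})=\bigoplus_{n\in\Z}C_c(\mathcal{G}_n)$ is dense in $C^*(\mathcal{G})$, it therefore suffices to prove the identity when $a=f$ for some $f\in C_c(\mathcal{G}_n)$ with $n\in\Z$ fixed.

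Fix such an $f$. First I would record that $\lambda_x(f)\delta_{\gamma_1}=f\ast\delta_{\gamma_1}$ and that, for $\gamma\in\mathcal{G}_x$,
\[
(f\ast\delta_{\gamma_1})(\gamma)=\sum_{g_1g_2=\gamma}f(g_1)\delta_{\gamma_1}(g_2)=f(\gamma\gamma_1^{-1}),
\]
where $\gamma\gamma_1^{-1}$ is defined since $\mathsf{s}(\gamma)=\mathsf{s}(\gamma_1)=x$, and $f$ is understood to vanish off its compact strict support. Evaluating at $\gamma=\gamma_2$ gives $\langle\lambda_x(f)\delta_{\gamma_1},\delta_{\gamma_2}\rangle=f(\gamma_2\gamma_1^{-1})$. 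Since $c$ is a cocycle, $c(\gamma_2\gamma_1^{-1})=c(\gamma_2)-c(\gamma_1)$, and as $f$ is supported on $\mathcal{G}_n=c^{-1}(\{n\})$ we conclude that $\langle\lambda_x(f)\delta_{\gamma_1},\delta_{\gamma_2}\rangle\ne 0$ forces $c(\gamma_2)-c(\gamma_1)=n$.

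Finally, $T_z(f)=z^nf$ by definition of the gauge action, so
\[
\langle\lambda_x(T_z(f))\delta_{\gamma_1},\delta_{\gamma_2}\rangle=z^n f(\gamma_2\gamma_1^{-1})=z^n\langle\lambda_x(f)\delta_{\gamma_1},\delta_{\gamma_2}\rangle.
\]
If $c(\gamma_2)-c(\gamma_1)=n$ the right-hand side equals $z^{c(\gamma_2)-c(\gamma_1)}\langle\lambda_x(f)\delta_{\gamma_1},\delta_{\gamma_2}\rangle$; if $c(\gamma_2)-c(\gamma_1)\ne n$ then both $\langle\lambda_x(f)\delta_{\gamma_1},\delta_{\gamma_2}\rangle$ and $\langle\lambda_x(T_z(f))\delta_{\gamma_1},\delta_{\gamma_2}\rangle$ vanish and the identity is trivial. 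The density argument of the first paragraph then upgrades this to arbitrary $a\in C^*(\mathcal{G})$. The only genuinely delicate point is precisely this last reduction: a general $a\in C^*(\mathcal{G})$ cannot be decomposed into its graded components in a norm-convergent way, so one cannot argue ``graded piece by graded piece'' on $a$ itself and must instead pass through the dense $\ast$-subalgebra $C_c(\mathcal{G})$ using continuity of $T_z$ and of the matrix coefficients; everything else is a direct computation with the convolution formula and the cocycle identity.
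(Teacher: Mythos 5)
Your argument is correct and follows essentially the same route as the paper: reduce by continuity of $T_z$ and linearity to a single homogeneous $f$ supported on some $\mathcal{G}_n$, use the formula $\langle\lambda_x(f)\delta_{\gamma_1},\delta_{\gamma_2}\rangle=f(\gamma_2\gamma_1^{-1})$, and conclude via the cocycle identity $c(\gamma_2\gamma_1^{-1})=c(\gamma_2)-c(\gamma_1)$, noting that both sides vanish unless this equals $n$. The only difference is that you spell out the convolution computation and the norm-continuity of the matrix-coefficient functionals, which the paper leaves implicit.
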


\begin{proof}
Since $T_z$ is continuous, it suffices to show the statement assuming that $a\in C_c(\mathcal{G})$. By linearity, it suffices to assume that there exists $m\in\Z$ such that $a$ is supported on $\mathcal{G}_m$.

Since for $f\in C_c(\mathcal{G})$ one has $\langle \lambda(f)\delta_{\gamma_1},\gamma_2\rangle=f(\gamma_2\gamma_1^{-1})$, in
order to establish the formula in the statement we need to
show that
\[z^ma(\gamma_2\gamma_1^{-1})=z^{c(\gamma_2)-c(\gamma_1)}a(\gamma_2\gamma_1^{-1}).\]
Note that both expressions are zero unless $c(\gamma_2\gamma_1^{-1})=m$. Since $c(\gamma_2\gamma_1^{-1})=c(\gamma_2)-c(\gamma_1)$, the
identity follows.
\end{proof}

It follows from the lemma that the gauge action of $\mathbb T$ on $C^*(\mathcal G)$ induces a well-defined gauge action of $\mathbb T$ on the reduced $C^*$-algebra $C^*_r(\mathcal G)$.

Recall that if $T\colon \T\to \Aut(A)$ is a circle action on
a $C^*$-algebra $A$, then there is a canonical faithful condition expectation $E\colon A\to A^{\mathbb T}$ given by $E(a)=\int_\T T_z(a)\, dz$, where the integration is with respect to the normalized Haar measure.  If $\mathcal G$ is an \'etale groupoid and $\mathcal H$ is an open subgroupoid, then $C_c(\mathcal H)$ embeds in $C_c(\mathcal G)$ via extension by zero and the closure of $C_c(\mathcal H)$ in $C^*_r(\mathcal G)$ is isomorphic to $C^*_r(\mathcal H)$; see~\cite[Proposition~1.9]{PhillipsCross}, which does not use the blanket Hausdorff assumption.

\begin{prop}\label{Prop:FixedPt}
Let $(\mathcal{G},c)$ be a graded groupoid, and let $T\colon \T\to \Aut(C^*(\mathcal{G}))$ be the gauge action defined in
\autoref{df:Gauge}. Then
\[C^*_r(\mathcal{G})^\T=C^*_r(\mathcal{G}_0).\]
In particular, any element in $C^*_r(\mathcal G)^{\mathbb T}\subseteq C^*_r(\mathcal G)$ is supported on $\mathcal G_0$.
\end{prop}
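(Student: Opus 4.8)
The plan is to prove the two inclusions $C^*_r(\mathcal{G})^\T \subseteq C^*_r(\mathcal{G}_0)$ and $C^*_r(\mathcal{G}_0) \subseteq C^*_r(\mathcal{G})^\T$ separately, the second being essentially immediate. For the inclusion $C^*_r(\mathcal{G}_0) \subseteq C^*_r(\mathcal{G})^\T$: since $\mathcal{G}_0 = c^{-1}(\{0\})$ is an open subgroupoid, the closure of $C_c(\mathcal{G}_0)$ inside $C^*_r(\mathcal{G})$ is a copy of $C^*_r(\mathcal{G}_0)$ by the cited \cite[Proposition~1.9]{PhillipsCross}. On elements of $C_c(\mathcal{G}_0)$ the gauge action is trivial by definition (for $f \in C_c(\mathcal{G}_0)$ we have $T_z(f)(\gamma) = z^0 f(\gamma) = f(\gamma)$), and since each $T_z$ is continuous on $C^*_r(\mathcal{G})$, it is trivial on the closure as well; hence $C^*_r(\mathcal{G}_0) \subseteq C^*_r(\mathcal{G})^\T$.

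For the reverse inclusion, the key tool is the faithful conditional expectation $E\colon C^*_r(\mathcal{G}) \to C^*_r(\mathcal{G})^\T$ given by $E(a) = \int_\T T_z(a)\,dz$. I would first verify that on $C_c(\mathcal{G})$ this expectation is the obvious one: writing $a = \sum_n a_n$ with $a_n \in C_c(\mathcal{G}_n)$, one has $E(a) = a_0 \in C_c(\mathcal{G}_0)$, since $\int_\T z^n\,dz = 0$ for $n \neq 0$. Thus $E$ maps $C_c(\mathcal{G})$ into $C_c(\mathcal{G}_0)$, and by continuity of $E$ it maps $C^*_r(\mathcal{G}) = \overline{C_c(\mathcal{G})}$ into $\overline{C_c(\mathcal{G}_0)} = C^*_r(\mathcal{G}_0)$. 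If $a \in C^*_r(\mathcal{G})^\T$, then $T_z(a) = a$ for all $z$, so $E(a) = a$; combined with $E(a) \in C^*_r(\mathcal{G}_0)$ this gives $a \in C^*_r(\mathcal{G}_0)$, completing the proof. The final ``in particular'' clause then follows from the fact that elements of $C^*_r(\mathcal{G}_0)$ are supported on $\mathcal{G}_0$ when regarded as functions on $\mathcal{G}$ via the $j$-map.

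The main obstacle is justifying the passage from $C_c(\mathcal{G})$ to the completion, namely the claim that $E$ restricted to $C_c(\mathcal{G})$ equals the ``project onto the degree-zero part'' map and that this identification is compatible with the $j$-map picture of $C^*_r(\mathcal{G})$. Rather than computing $E$ on $C_c(\mathcal{G})$ abstractly, the cleanest route is to use \autoref{lma:Gauge}: for $a \in C^*_r(\mathcal{G})$ one computes $\langle \lambda_x(E(a))\delta_{\gamma_1},\delta_{\gamma_2}\rangle = \int_\T z^{c(\gamma_2)-c(\gamma_1)} \langle \lambda_x(a)\delta_{\gamma_1},\delta_{\gamma_2}\rangle\,dz$, which vanishes unless $c(\gamma_1) = c(\gamma_2)$, and in particular (taking $\gamma_1 = x$, so $c(\gamma_1)=0$) shows that $E(a)$, regarded via the $j$-map as a function on $\mathcal{G}$, is supported on $c^{-1}(\{0\}) = \mathcal{G}_0$. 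This handles the ``in particular'' clause directly and, together with the observation that $E(a) = a$ for $a$ fixed by the gauge action, shows every such $a$ is supported on $\mathcal{G}_0$; one then invokes the subgroupoid identification from \cite{PhillipsCross} to conclude that such $a$ lies in the copy of $C^*_r(\mathcal{G}_0)$.
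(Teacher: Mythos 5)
Your main argument (first paragraph plus the first half of the second) is correct and is essentially the paper's proof: identify $C^*_r(\mathcal{G}_0)$ with $\overline{C_c(\mathcal{G}_0)}\subseteq C^*_r(\mathcal{G})$ via \cite[Proposition~1.9]{PhillipsCross}, note that $E(f)=f$ for $f\in C_c(\mathcal{G}_0)$ and $E(f)=0$ for $f\in C_c(\mathcal{G}_m)$ with $m\neq 0$, and then for gauge-invariant $a$ write $a=E(a)=\lim_n E(f_n)\in \overline{C_c(\mathcal{G}_0)}$ for any sequence $(f_n)$ in $C_c(\mathcal{G})$ converging to $a$. Also, the ``obstacle'' you worry about is not one: $T_z(f)=z^mf$ for $f\in C_c(\mathcal{G}_m)$ is the very definition of the gauge action, so $E(f)=\bigl(\int_\T z^m\,dz\bigr)f$ needs no compatibility with the $j$-map.

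The alternative ``cleanest route'' in your last paragraph, however, has a genuine gap at its final step. The computation via \autoref{lma:Gauge} does show that a gauge-invariant element $a$, regarded as a function on $\mathcal{G}$ through the $j$-map, is supported on $\mathcal{G}_0$ (and this is a fine way to get the ``in particular'' clause). But support on $\mathcal{G}_0$ does not by itself imply $a\in\overline{C_c(\mathcal{G}_0)}$: \cite[Proposition~1.9]{PhillipsCross} only identifies the closure of $C_c(\mathcal{G}_0)$ inside $C^*_r(\mathcal{G})$ with $C^*_r(\mathcal{G}_0)$; it says nothing about whether an arbitrary element of $C^*_r(\mathcal{G})$ whose strict support lies in $\mathcal{G}_0$ belongs to that closure. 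Passing from a support condition on $j(a)$ to membership in a subalgebra is precisely the kind of delicate point in this (possibly non-Hausdorff) setting --- the essential-ideal discussion in the paper exists because elements of the completion are not controlled by naive support considerations. So to prove the equality $C^*_r(\mathcal{G})^\T=C^*_r(\mathcal{G}_0)$ you should keep your first argument, which is the one the paper uses.
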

\begin{proof}
We identify $C^*_r(\mathcal G_0)$ with $\overline{C_c(\mathcal G_0)}\subseteq C^*_r(\mathcal G)$.
Given $m\in\Z$ and $f\in C_c(\mathcal{G}_m)$, it follows from \autoref{lma:Gauge} that
\[E(f) = \begin{cases}f, & \text{if}\ m=0\\ 0, & \text{else.}\end{cases}.\]
In particular, we deduce that $C_c(\mathcal G_0)\subseteq C^*_r(\mathcal G)^{\mathbb T}$ and hence $C_r^*(\mathcal G_0)\subseteq C^*_r(\mathcal G)^{\mathbb T}$.  For the converse inclusion, given $a\in  C^*_r(\mathcal G)^{\mathbb T}$, find
a sequence $(f_n)_{n\in\N}$ in $C_c(\mathcal{G})$ which converges
to $a$ in $C_r^*(\mathcal{G})$. Then $a=E(a) = \lim E(f_n)$ and $E(f_n)\in C_c(\mathcal G_0)$ by the above formula, whence $a\in C^*_r(\mathcal G_0)$.  
\end{proof}

Next, we show that for,  graded groupoids, the essential ideal is always gauge-invariant.

\begin{lma}\label{lma:IessInv}
Let $\mathcal G$ be a second countable graded \'etale groupoid. Then,
for every $z\in\T$, we have $T_z(I_{\mathrm{ess}}(\mathcal G))= I_{\mathrm{ess}}(\mathcal G)$. Moreover,
\[E(I_{\mathrm{ess}}(\mathcal G))=I_{\mathrm{ess}}(\mathcal G_0)=I_{\mathrm{ess}}(\mathcal G)\cap C^*(\mathcal G)^{\mathbb T}.\]
\end{lma}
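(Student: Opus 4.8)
The plan is to prove the three assertions in turn, reading $C^*(\mathcal{G})^{\mathbb{T}}$ in the statement as the fixed-point algebra $C^*_r(\mathcal{G})^{\mathbb{T}}$, which by \autoref{Prop:FixedPt} is $C^*_r(\mathcal{G}_0)$.

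\emph{Gauge invariance.} First I would compute the effect of $T_z$ on the $j$-map. Applying \autoref{lma:Gauge} with $\gamma_1=x$ and $\gamma_2=\gamma\in\mathcal{G}_x$, and using that $c(x)=0$ for a unit $x$ (since $c(x)=c(x\cdot x)=2c(x)$), one gets
\[T_z(a)(\gamma)=z^{c(\gamma)}a(\gamma)\qquad\bigl(a\in C^*_r(\mathcal{G}),\ x=\mathsf{s}(\gamma)\bigr).\]
Since $z^{c(\gamma)}\neq 0$, the functions $T_z(a)$ and $a$ on $\mathcal{G}$ have the same strict support, so by \autoref{df:EssIdeal} we have $a\in I_{\mathrm{ess}}(\mathcal{G})$ if and only if $T_z(a)\in I_{\mathrm{ess}}(\mathcal{G})$, i.e.\ $T_z(I_{\mathrm{ess}}(\mathcal{G}))=I_{\mathrm{ess}}(\mathcal{G})$.

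\emph{The conditional expectation.} Given gauge invariance, the identity $E(I_{\mathrm{ess}}(\mathcal{G}))=I_{\mathrm{ess}}(\mathcal{G})\cap C^*_r(\mathcal{G})^{\mathbb{T}}$ is formal: $I_{\mathrm{ess}}(\mathcal{G})$ is a norm-closed subspace, and for $a\in I_{\mathrm{ess}}(\mathcal{G})$ the integral $E(a)=\int_{\mathbb{T}}T_z(a)\,dz$ is a norm-limit of Riemann sums of elements $T_z(a)\in I_{\mathrm{ess}}(\mathcal{G})$, hence $E(a)\in I_{\mathrm{ess}}(\mathcal{G})$; since $E(a)\in C^*_r(\mathcal{G})^{\mathbb{T}}$ always, this gives one inclusion. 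Conversely, if $a\in I_{\mathrm{ess}}(\mathcal{G})\cap C^*_r(\mathcal{G})^{\mathbb{T}}$ then $E(a)=a\in E(I_{\mathrm{ess}}(\mathcal{G}))$.

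\emph{Identification with $I_{\mathrm{ess}}(\mathcal{G}_0)$.} It remains to show $I_{\mathrm{ess}}(\mathcal{G})\cap C^*_r(\mathcal{G}_0)=I_{\mathrm{ess}}(\mathcal{G}_0)$, where $C^*_r(\mathcal{G}_0)\subseteq C^*_r(\mathcal{G})$ denotes the closure of $C_c(\mathcal{G}_0)$, identified with $C^*_r(\mathcal{G}_0)$ via \cite[Proposition~1.9]{PhillipsCross}. The crux is the compatibility of the $j$-map with this inclusion: for $a\in C^*_r(\mathcal{G}_0)$, its image as a function on $\mathcal{G}$ should be the extension by zero of its image as a function on $\mathcal{G}_0$. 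To see this, I would choose $f_n\in C_c(\mathcal{G}_0)$ with $f_n\to a$; by \cite[Proposition~1.9]{PhillipsCross} this convergence holds simultaneously in $C^*_r(\mathcal{G}_0)$ and in $C^*_r(\mathcal{G})$, and since $\|\cdot\|_\infty\leq\|\cdot\|$ for the $j$-maps of both groupoids, the uniform limit of the $f_n$ is at once the $j$-map of $a$ in $\mathcal{G}_0$ and, after extension by zero, the $j$-map of $a$ in $\mathcal{G}$; in particular the strict support of $a$ in $\mathcal{G}$ lies in $\mathcal{G}_0$ and coincides with the strict support of $a$ in $\mathcal{G}_0$. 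Finally, since $\mathcal{G}_0=c^{-1}(\{0\})$ is clopen in $\mathcal{G}$, a subset of $\mathcal{G}_0$ has empty interior in $\mathcal{G}$ if and only if it has empty interior in $\mathcal{G}_0$; combined with second countability of $\mathcal{G}_0$ (inherited from $\mathcal{G}$), this yields $a\in I_{\mathrm{ess}}(\mathcal{G})\cap C^*_r(\mathcal{G}_0)\iff a\in I_{\mathrm{ess}}(\mathcal{G}_0)$, completing the proof. The only step I expect to require care is this $j$-map compatibility, i.e.\ checking that passing to the reduced $C^*$-algebra of the open subgroupoid $\mathcal{G}_0$ does not alter the "function on the groupoid'' interpretation of an element; the rest is bookkeeping with \autoref{lma:Gauge}, \autoref{Prop:FixedPt}, and elementary point-set topology.
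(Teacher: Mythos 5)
Your proposal is correct and follows essentially the same route as the paper: gauge invariance of the strict support via \autoref{lma:Gauge}, the formal argument with Riemann sums and $E(a)=a$ on fixed points, and the identification $I_{\mathrm{ess}}(\mathcal G)\cap C^*_r(\mathcal G_0)=I_{\mathrm{ess}}(\mathcal G_0)$ using \autoref{Prop:FixedPt} and the fact that $\mathcal G_0$ is clopen. The only difference is that you spell out the $j$-map compatibility with the inclusion $C^*_r(\mathcal G_0)\subseteq C^*_r(\mathcal G)$, a point the paper treats as immediate.
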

\begin{proof}
It follows from \autoref{lma:Gauge} that the strict support of $a$ and $T_z(a)$ coincide, and hence $a\in I_{\mathrm{ess}}(\mathcal G)$ if and only if $T_z(a)\in I_{\mathrm{ess}}(\mathcal G)$.  

For the last statement, since elements of $C^*_r(\mathcal G)^{\mathbb T}=C_r^*(\mathcal G_0)$ are supported on $\mathcal G_0$ by \autoref{Prop:FixedPt}, and as $\mathcal{G}_0$ is clopen in $\mathcal{G}$, it follows that $I_{\mathrm{ess}}(\mathcal G_0) = I_{\mathrm{ess}}(\mathcal G)\cap C_r^*(\mathcal G_0)$.  Therefore, $I_{\mathrm{ess}}(\mathcal G_0)\subseteq E(I_{\mathrm{ess}}(\mathcal G))$.  But the first statement
together with
the fact that $E$ is given by $E(a)=\int_\T T_z(a) dz$ for all $a\in C_r^*(\mathcal{G})$, with integration with respect to the normalized Haar measure, implies that $E(I_{\mathrm{ess}}(\mathcal G))\subseteq I_{\mathrm{ess}}(\mathcal G)\cap C_r^*(\mathcal G)^{\mathbb T} = I_{\mathrm{ess}}(\mathcal G)\cap C_r^*(\mathcal G_0)=I_{\mathrm{ess}}(\mathcal G_0)$, and the second statement follows.
\end{proof}

\section{Preliminaries on contracting self-similar groups}

Given a set $\X$, which we regard as an alphabet,
we write $\X^\omega$ for the set of all infinite words $\mathsf{x}_1\mathsf{x}_2\cdots$ with $\mathsf{x}_n\in\X$ for all $n\in\N$. Similarly, we
let $\X^\ast$ (respectively, $\X^+$) denote the set of all finite (respectively, finite non-empty) words on $\X$.  We equip $\X^\omega$ with the metric $d(u,v) = 2^{-|u\wedge v|}$ where $u\wedge v$ is the longest common prefix of $u,v$.

Since every closed ball in $\X^\omega$ is equal to the set $u\X^\omega$ of sequences with a given prefix $u\in\X^\omega$, every isometry $f$ of $\X^\omega$ induces a permutation of $\X^\ast$. This permutation preserves inclusion of balls, hence it is an automorphism of the rooted tree with the set of vertices $\X^\ast$, and edges of the form $\{v, vx\}$ for $v\in\X^\ast$ and $x\in\X$, i.e., the Cayley graph of $\X^*$.

\begin{df}
Let $G$ be a discrete group, let $\X$ be a finite set and let $G\curvearrowright \X^\omega$ be a faithful action by isometries.  There is the induced action of $G$ on $\X^*$.

We say that $(G,\X)$ is \emph{self-similar}
if, for all $g\in G$ and all $\mathsf{x}\in \X$, there exists $g|_\mathsf{x}\in G$
such that for all $w\in \X^\omega$,
we have $g(\mathsf{x}w)=g(\mathsf{x})g|_\mathsf{x}(w)$.  We can define inductively $g|_w$ for $w\in \X^*$ by $g|_{\mathsf xu}=(g|_{\mathsf x})|_u$, and  $g|_{\emptyset}=g$ for the empty word. Then $g(vu)=g(v)g|_v(u)$ for all $v, u\in\X^*$.

We moreover say that $(G,\X)$ is \emph{contracting} if there exists
a finite subset $N\subseteq G$ with the following properties:
\be\item For every $n\in N$ and for all $\mathsf{x}\in\X$,
we have $n|_{\mathsf{x}}\in N$.
\item For all $g\in G$ there exists $m\geq 0$ such that
$g|_w\in N$ for all $w\in \X^m$, that is, $g|_{\X^m}\subseteq N$. \ee

If $(G,\X)$ is contracting and self-similar, then there is a smallest finite subset $\mathcal{N}\subseteq G$ satisfying properties (a) and (b) above called the \emph{nucleus} of $(G,\X)$.
\end{df}

Note that the elements $g|_{\mathsf x}$ in the definition above must be unique, since the action $G\curvearrowright \X^\omega$ is assumed to be faithful.  We refer the reader to~\cite{selfsimilar} for more details on self-similar groups.

The second named author defined $C^*$-algebras of self-similar groups in~\cite{Nek:jot,Nekcstar} and their $*$-algebraic analogues in~\cite{Nekgrpd}. See also~\cite{ExelPardoSelf} for generalizations.

\begin{df}
Let $(G,\X)$ be a self-similar group. We define the $\ast$-algebra $\C(G,\X)$ to be the universal
$\ast$-algebra
generated by $\{\mathsf{u}_g\colon g\in G\} \cup \{s_\mathsf{x}\colon \mathsf{x}\in \X\}$, subject to the relations
\be\item[(1)] the map $g\mapsto \mathsf{u}_g$ is a unitary representation of $G$;
\item [(2)] $\mathsf{u}_g s_{\mathsf{x}}=s_{g(\mathsf{x})}\mathsf{u}_{g|_{\mathsf{x}}}$ for all $g\in G$ and all $\mathsf{x}\in\X$;
\item[(CK1)] $s_{\mathsf{x}}^*s_{\mathsf{y}}=\delta_{\mathsf{x},\mathsf{y}}$ for all $\mathsf{x},\mathsf{y}\in\X$;
\item[(CK2)] $\sum_{\mathsf{x}\in\X}s_{\mathsf{x}}s_{\mathsf{x}}^*=1$.
\ee

We also define the C*-algebra $\Oo(G,\X)$ of $(G,\X)$
to be the universal C*-algebra with the above generators and relations.
\end{df}

In the case that $(G,\X)$ is contracting, it is known that $\C(G,\X)$ and $\mathcal O(G,\X)$ are finitely presented and depend only on the nucleus $\mathcal N$ (as a set of isometries of $\X^\omega$) \cite{Nekcstar,SS23}. 

\begin{eg} When $G$ is the trivial group, the algebra $\C(G,\X)$ is the Leavitt algebra $L_{|\X|}$, and the C*-algebra $\Oo(G,\X)$ is the Cuntz algebra $\mathcal{O}_{|\X|}$.
\end{eg}

The algebras defined above admit natural
ample groupoid models, which
we proceed to describe. Let $S(G,\X)$ be the $\ast$-semigroup with zero given by the generators and the relations (1), (2) and (CK1).  Then $S(G,\X)$ is an inverse semigroup and each of its nonzero elements can be uniquely expressed in the form $s_v\mathsf{u}_gs_w^*$ with $v,w\in\X^\ast$. We let $S(G,\X)$ act on $\X^\omega$ canonically, where the domain of
$s_v\mathsf{u}_gs_w^*$ is the clopen set $w\X^\omega$,
and the corresponding partial homeomorphism maps $w\rho$
to $v(g(\rho))$.

\begin{df}
Given a self-similar group $(G,\X)$, we let $\mathcal{G}(G,\X)$
be the groupoid of germs of the inverse semigroup action $S(G,\X)\curvearrowright \X^\omega$ described above.
\end{df}

It is clear from the construction that $\mathcal{G}(G,\X)$ is
ample, minimal and effective~\cite{Nekcstar,ExelPardoSelf}.  It is second countable if $G$ is countable.
It may, however, fail to be Hausdorff. Moreover,
we have $\mathbb C\mathcal{G}(G,\X)\cong \C(G,\X)$ and
$C^*(\mathcal{G}(G,\X))\cong \Oo(G,\X)$, both canonically~\cite{Nekcstar,ExelPardoSelf,SS23}.

The isomorphism maps $\mathsf{u}_g$ and $s_{\mathsf{x}}$ to the characteristic functions of the sets of germs of the transformations $w\mapsto g(w)$ and $w\mapsto\mathsf{x}w$, respectively. We will identify the generators with the corresponding elements of the algebra.

When $(G,\X)$ is contracting it is known that $\mathcal(G,\X)$ is amenable, and hence $\Oo(G,X)\cong C_r^*(\mathcal G(G,\X))$~\cite{Nekcstar}. (The argument of~\cite{Nekcstar}, given for the Hausdorff case only, extends to the general case by the results of J.~Renault~\cite{RenaultnonH}; also, the self-replicating hypothesis in~\cite{Nekcstar} is unnecessary, cf.~\cite{MillerStein}).

There is also a canonical grading on $\mathcal{G}(G,\X)$,
which gives rise to a gauge action by the discussion in
the previous section.

\begin{df}
Given a self-similar group $(G,\X)$, we let $c\colon
\mathcal{G}(G,\X)\to\Z$ be the continuous cocycle given by
$c([s_v\mathsf{u}_gs_w^*,\rho])=|v|-|w|$, for all germs
$[s_v\mathsf{u}_gs_w^*,\rho]\in \mathcal{G}(G,\X)$.
\end{df}

\begin{rem}
Under the natural identification of $C^*(\mathcal{G}(G,\X))$ with
$\mathcal{O}(G,\X)$, one can check that the gauge action from
\autoref{df:Gauge} is given by
$T_z(\mathsf{u}_g)=\mathsf{u}_g$ and $T_z(s_\mathsf{x})=zs_\mathsf{x}$ for all $z\in\T$, all $g\in G$ and all $x\in\X$.  Also, identifying $\mathbb C(G,\X)$ with $\mathbb C\mathcal{G}(G,\X)$, the element $s_u\mathsf{u}_gs_v^*$ is homogeneous of degree $|u|-|v|$.
\end{rem}

The subalgebra $A=\mathcal{O}(G,\X)^\T$
of $\mathcal{O}(G,\X)$   consisting
of gauge-invariant elements, and its algebraic analogue the homogeneous component $A_0=\mathbb C(G,\X)_0$, will play an important role for us.  Note that $A_0$ and $A$ are just the complex and $C^*$-algebras of the ample groupoid $\mathcal G(G,\X)_0$, and hence $A_0$ is dense in $A$.

\begin{rem}\label{rem:IndLimitA}
The algebras $A$ and $A_0$ admit the following descriptions, due to the second named author.  We shall not formally use these descriptions but they explain the main idea well.
We set $d=|\X|$, and recall that there is a canonical
unitary representation of $G$ on $M_d(C^*(G))$
given by  \[\mu(g)_{\mathsf x,\mathsf y} = \begin{cases} g|_{\mathsf y}, & \text{if}\ \mathsf x=g(\mathsf y), \\
                                            0, & \text{else.}
                                           \end{cases}
\]
for all $g\in G$. By integrating this representation to a
homomorphism $\mu\colon C^*(G)\to M_d(C^*(G))$,
and tensoring this map with the identity on suitable matrix
algebras, we get an inductive system
\[C^*(G)\to M_d(C^*(G))\to M_{d^2}(C^*(G))\to \cdots,\]
and  $A$ is the $C^*$-algebraic direct limit. Also, $A_0$ is the $\ast$-algebraic direct limit of
\[\C G\to M_d(\C G)\to M_{d^2}(\C G)\to \cdots,\]
which is a dense subalgebra of $A$.
\end{rem}

Suppose that $(G,\X)$ is contracting with nucleus $\mathcal N$ and write $\mathbb C\mathcal N$ for the span of $\mathcal N$ in $\C G$.  Adopt the notation used in the remark above.
If $a\in \mathbb C\mathcal N$, then $\mu(a)\in M_d(\mathbb C\mathcal N)$ since $\mathcal N$ is closed under the operation $g\mapsto g|_{\mathsf x}$.  Moreover, if $a\in \mathbb CG$, then since $a$ has finite support, by the definition of contraction, we can find an integer $n$ with the image of $a$ in $M_{d^n}(\mathbb C\mathcal N)$.  Thus $A_0$ is the direct limit of the inductive system of vector spaces
\[\C\mathcal N\to M_d(\C\mathcal N)\to M_{d^2}(\C\mathcal N)\to \cdots.\]
The map $\C\mathcal{N}\longrightarrow\mathcal{O}(G, \X)$, and consequently the linear maps in the sequence above, are not injective in general. From now on, we will denote by $\C\mathcal{N}$ the image of the span of $\mathcal{N}$ in $\mathcal{O}(G, \X)$.

In this notation the maps in the sequence above are injective, and $A_0$ is the corresponding increasing union of the subspaces $M_{d^n}(\C\mathcal{N})$.
The intuition is then that, for any $a\in A$, the entries of $a$, as you move along the inductive system, accumulate in $\mathbb C\mathcal N$.


\begin{prop}
The essential ideals of $I_{\mathrm{ess}}^{\mathrm{alg}}(\mathcal{G}(G, \X))$ and $I_{\mathrm{ess}}^{\mathrm{alg}}(\mathcal{G}(G, \X)_0)$ are generated in the respective algebras by the intersection of the essential ideal with $\C\mathcal{N}$.
\end{prop}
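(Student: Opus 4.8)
Write $\mathcal{G}=\mathcal{G}(G,\X)$, $A_0=\C\mathcal{G}_0$, $d=|\X|$, and $J:=I_{\mathrm{ess}}^{\mathrm{alg}}(\mathcal{G}_0)\cap\C\mathcal{N}$; for a subset $S$ of a $\C$-algebra $B$ let $\langle S\rangle_B$ denote the two-sided ideal it generates. Since $\C\mathcal{N}\subseteq A_0\subseteq C_r^*(\mathcal{G}_0)$, intersecting the identity $I_{\mathrm{ess}}(\mathcal{G}_0)=I_{\mathrm{ess}}(\mathcal{G})\cap C_r^*(\mathcal{G}_0)$ coming from \autoref{Prop:FixedPt} and \autoref{lma:IessInv} with $\C\mathcal{N}$ shows $J=I_{\mathrm{ess}}^{\mathrm{alg}}(\mathcal{G})\cap\C\mathcal{N}$ as well, so both assertions concern the single subspace $J$. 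The plan is to reduce the statement for $\mathcal{G}$ to that for $\mathcal{G}_0$ and then prove the latter. For the reduction, note that if $a\in\C\mathcal{G}$ has strict support of empty interior then so does each homogeneous component $a_n\in\C\mathcal{G}_n$, because $\mathcal{G}_n$ is open and an open subset of an interior-free set is interior-free; thus if $a\in I_{\mathrm{ess}}^{\mathrm{alg}}(\mathcal{G})$ then $a_0\in\C\mathcal{G}_0\cap I_{\mathrm{ess}}(\mathcal{G})=I_{\mathrm{ess}}^{\mathrm{alg}}(\mathcal{G}_0)$, and for $n>0$ we have $a_n=\sum_{|v|=n}s_v(s_v^*a_n)$ with $s_v\in\C\mathcal{G}$ and $s_v^*a_n\in\C\mathcal{G}_0\cap I_{\mathrm{ess}}(\mathcal{G})=I_{\mathrm{ess}}^{\mathrm{alg}}(\mathcal{G}_0)$, the case $n<0$ following by applying this to $a_n^*$. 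Hence $I_{\mathrm{ess}}^{\mathrm{alg}}(\mathcal{G})=\langle I_{\mathrm{ess}}^{\mathrm{alg}}(\mathcal{G}_0)\rangle_{\C\mathcal{G}}$, so once we prove $I_{\mathrm{ess}}^{\mathrm{alg}}(\mathcal{G}_0)=\langle J\rangle_{A_0}$ we obtain $I_{\mathrm{ess}}^{\mathrm{alg}}(\mathcal{G})=\langle J\rangle_{\C\mathcal{G}}$, since $A_0\subseteq\C\mathcal{G}$.

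I would then describe $I_{\mathrm{ess}}^{\mathrm{alg}}(\mathcal{G}_0)$ via the increasing union $A_0=\bigcup_n M_{d^n}(\C\mathcal{N})$. Let $M_{d^n}(J)$ be the $\C$-span of $\{s_vjs_w^*:v,w\in\X^n,\ j\in J\}$. Each $s_vjs_w^*$ lies in $I_{\mathrm{ess}}(\mathcal{G})$ — an ideal of $C_r^*(\mathcal{G})$ containing $J$ — and in $\C\mathcal{G}_0$, so $M_{d^n}(J)\subseteq I_{\mathrm{ess}}^{\mathrm{alg}}(\mathcal{G}_0)$. Conversely, if $a\in I_{\mathrm{ess}}^{\mathrm{alg}}(\mathcal{G}_0)$ lies in $M_{d^n}(\C\mathcal{N})$, then for $v,w\in\X^n$ the entry $s_v^*as_w$ is homogeneous of degree $0$, lies in $\C\mathcal{N}$, and lies in $I_{\mathrm{ess}}(\mathcal{G})$, hence in $J$; as $a=\sum_{v,w\in\X^n}s_v(s_v^*as_w)s_w^*$, this gives $a\in M_{d^n}(J)$. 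Therefore $I_{\mathrm{ess}}^{\mathrm{alg}}(\mathcal{G}_0)=\bigcup_n M_{d^n}(J)$, and since $\langle J\rangle_{A_0}\subseteq I_{\mathrm{ess}}^{\mathrm{alg}}(\mathcal{G}_0)$ it remains only to show $M_{d^n}(J)\subseteq\langle J\rangle_{A_0}$ for every $n$.

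The key observation is that $\langle J\rangle_{A_0}$ already contains every element of the form $s_v(s_p^*j's_q)s_w^*=(s_vs_p^*)\,j'\,(s_qs_w^*)$ with $j'\in J$ and $v,w,p,q\in\X^n$, because $s_vs_p^*$ and $s_qs_w^*$ are homogeneous of degree $0$ and so lie in $A_0$. So $M_{d^n}(J)\subseteq\langle J\rangle_{A_0}$ will follow once one shows, for $n$ large, that $J=\spn\{s_p^*j's_q:j'\in J,\ p,q\in\X^n\}$. For this I would use that the nucleus $\mathcal{N}$ has no ``sources'': were $g\in\mathcal{N}$ not of the form $h|_{\mathsf{x}}$ with $h\in\mathcal{N}$, $\mathsf{x}\in\X$, then $\mathcal{N}\setminus\{g\}$ would still satisfy conditions (a) and (b) of the definition of contraction (no element of $\mathcal{N}$ restricts to $g$ in one step, and if $f|_{\X^{m_0}}\subseteq\mathcal{N}$ then for $|w|>m_0$ each $f|_w$ is a one-step restriction of an element of $\mathcal{N}$, hence is not $g$), contradicting minimality of $\mathcal{N}$. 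Iterating, for every $n$ and every $g\in\mathcal{N}$ there are $h\in\mathcal{N}$, $u\in\X^n$ with $h|_u=g$, and then iterating relations (2) and (CK1) yields $\mathsf{u}_g=s_{h(u)}^*\mathsf{u}_h s_u$.

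Given $j=\sum_g\lambda_g\mathsf{u}_g\in J$, pick such data $(h_g,u_g)$ for each $g$ in the support of $j$; grouping the terms of $\sum_g\lambda_g s_{h_g(u_g)}^*\mathsf{u}_{h_g}s_{u_g}$ according to the pair $(h_g(u_g),u_g)\in\X^n\times\X^n$ writes $j=\sum_{p,u}s_p^*j''_{p,u}s_u$ with each $j''_{p,u}:=\sum_{g:\,h_g(u_g)=p,\,u_g=u}\lambda_g\mathsf{u}_{h_g}\in\C\mathcal{N}$. To conclude, one needs the $h_g$ chosen so that each $j''_{p,u}$ again has strict support of empty interior, i.e.\ belongs to $I_{\mathrm{ess}}$ and hence to $J$. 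I expect arranging this — controlling the un-restrictions inside the essential ideal — to be the main difficulty, to be handled by combining the contraction hypothesis (which ensures $h|_u\in\mathcal{N}$ and allows passing to arbitrarily large $n$) with the germ-theoretic description of $I_{\mathrm{ess}}^{\mathrm{alg}}$ that underlies the algorithm of \cite{SS23}. Granting it, $M_{d^n}(J)\subseteq\langle J\rangle_{A_0}$, whence $I_{\mathrm{ess}}^{\mathrm{alg}}(\mathcal{G}_0)=\langle J\rangle_{A_0}$ and, by the first paragraph, $I_{\mathrm{ess}}^{\mathrm{alg}}(\mathcal{G})=\langle J\rangle_{\C\mathcal{G}}$.
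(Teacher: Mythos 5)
Your first two paragraphs are essentially the paper's own proof: it too splits an element of $I_{\mathrm{ess}}^{\mathrm{alg}}(\mathcal{G}(G,\X))$ into its homogeneous components, peels off $s_v$ (for degree $n>0$) or $s_v^*$ (for degree $n<0$) to reduce to $\mathcal{G}(G,\X)_0$, and then uses the contracting property to write any $f\in I_{\mathrm{ess}}^{\mathrm{alg}}(\mathcal{G}(G,\X)_0)$ as $\sum_{u,v\in\X^n}s_uf_{u,v}s_v^*$ with $f_{u,v}=s_u^*fs_v\in J:=\C\mathcal{N}\cap I_{\mathrm{ess}}$. The paper's proof stops exactly where your second paragraph stops: the identity $I_{\mathrm{ess}}^{\mathrm{alg}}(\mathcal{G}(G,\X)_0)=\bigcup_nM_{d^n}(J)$ is what ``generated by $J$'' means there, the multipliers $s_u,s_v^*$ being taken from the ambient algebra $\C(G,\X)$; this is also the only form of the statement used afterwards (in particular it is what the finite-presentation corollary requires).

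Your last two paragraphs aim at something strictly stronger, namely that $\bigcup_nM_{d^n}(J)$ coincides with the two-sided ideal $A_0JA_0$ of $A_0$ itself. You reduce this, correctly, to showing that for large $n$ one has $J=\spn\{s_p^*j's_q: j'\in J,\ p,q\in\X^n\}$; since $J$ is stable under $j\mapsto s_{\mathsf{x}}^*js_{\mathsf{y}}$, this asks whether the decreasing chain $J\supseteq\spn\{s_{\mathsf{x}}^*js_{\mathsf{y}}\}\supseteq\cdots$ stays equal to $J$. Your ``un-restriction'' argument via the no-source property of the nucleus does not settle this, as you acknowledge: nothing guarantees that the lifted element $\sum_g\lambda_g\mathsf{u}_{h_g}$ again has strict support with empty interior. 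So there is a genuine unproven claim at the end of your write-up --- but it is a claim the paper neither proves nor uses. If you read ``generated in the respective algebras'' the way the paper's own proof implicitly does, your argument is already complete at the end of your second paragraph, and the final step should either be deleted or flagged as a separate, genuinely open strengthening.
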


\begin{proof}
The fibers $c^{-1}(n)$ of the cocycle $c\colon \mathcal{G}(G, \X)\to\Z$ are pairwise disjoint and open. Consequently, an element $f\in\C(G, \X)$ belongs to the algebraic essential ideal $I_{\mathrm{ess}}^{\mathrm{alg}}(\mathcal G(G,\X))$ if and only if all its restrictions to the fibers $c^{-1}(n)$ belong to it (as only finitely many of them will be non-zero).

Similarly, if $n>0$, then the sets $s_v\cdot\mathcal{G}(G, \X)_0$, for $v\in\X^n$, of germs of transformations of the form $s_{vu_1}\mathsf{u}_gs_{u_2}^*$ for $|u_1|=|u_2|$ are also open and disjoint. Consequently, an element of $\C(G, \X)$ supported on $c^{-1}(n)$ belongs to the essential ideal if and only if it is equal to a sum $\sum_{v\in\X^n}s_vf_v$ for $f_v$ belonging to the intersection of the essential ideal with $\C(G, \X)_0$.

Similar arguments show that that an element $f\in\C(G, \X)$ supported on $c^{-1}(n)$ for $n<0$ belongs to the essential ideal if and only if it is $f=\sum_{v\in\X^{n}}f_vs_v^*$ for $f_v$ belonging to the intersection of the essential ideal with $\C(G, \X)_0$.

This proves that the essential ideal of $\C(G, \X)$ is generated in $\C(G, \X)$ by the essential ideal of $A_0=\C\mathcal{G}(G, \X)_0$.

For every element $f\in A_0$ there exists $n$ such that $f=\sum_{u, v\in\X^n}s_uf_{u, v}s_v^*$ for some $f_{u, v}\in\C\mathcal{N}$. Since $f_{u, v}=s_u^*fs_v$, the element $f$ belongs to the essential ideal if and only if all elements $f_{u, v}$ belong to it. This finishes the proof of the proposition.
\end{proof}

\begin{cor}
The algebra $\C(G, \X)/I_{\mathrm{ess}}^{\mathrm{alg}}(\mathcal{G}(G, \X))$ is finitely presented.
\end{cor}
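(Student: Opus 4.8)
The plan is to deduce this from two ingredients already at hand. The first is that $\C(G,\X)$ is itself finitely presented when $(G,\X)$ is contracting, as recalled above. The second is the preceding proposition, which asserts that $I_{\mathrm{ess}}^{\mathrm{alg}}(\mathcal{G}(G,\X))$ is generated, as a two-sided ideal of $\C(G,\X)$, by its intersection with $\C\mathcal{N}$. The crucial point is a finiteness observation: because the nucleus $\mathcal{N}$ is a finite set, $\C\mathcal{N}$ is a finite-dimensional subspace of $\C(G,\X)$, and therefore so is the subspace $I_{\mathrm{ess}}^{\mathrm{alg}}(\mathcal{G}(G,\X))\cap \C\mathcal{N}$. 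Fixing a (finite) vector-space basis $b_1,\dots,b_m$ of the latter, the preceding proposition tells us that $I_{\mathrm{ess}}^{\mathrm{alg}}(\mathcal{G}(G,\X))$ is generated, as an ideal of $\C(G,\X)$, by the finitely many elements $b_1,\dots,b_m$.

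It then remains to invoke the elementary fact that a quotient of a finitely presented (unital $*$-)algebra by a finitely generated two-sided ideal is again finitely presented. Concretely, fix a finite presentation $\C(G,\X)\cong F/\langle r_1,\dots,r_k\rangle$, where $F$ is the free unital $*$-algebra on some finite generating set and $r_1,\dots,r_k$ are the defining relations; since each $b_i$ lies in $\C\mathcal{N}$, it is a $\C$-linear combination of the unitaries $\mathsf{u}_n$ with $n\in\mathcal{N}$, hence an explicit $*$-polynomial $\widetilde{b_i}$ in the chosen generators of $F$. One then gets
\[\C(G,\X)/I_{\mathrm{ess}}^{\mathrm{alg}}(\mathcal{G}(G,\X)) \;\cong\; F\big/\big\langle r_1,\dots,r_k,\widetilde{b_1},\dots,\widetilde{b_m}\big\rangle,\]
which exhibits the quotient with finitely many generators and finitely many relations. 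The $*$-structure creates no difficulty: $I_{\mathrm{ess}}^{\mathrm{alg}}(\mathcal{G}(G,\X))$ is a $*$-ideal and $\C\mathcal{N}$ is $*$-closed (the nucleus is symmetric, and $\mathsf{u}_n^*=\mathsf{u}_{n^{-1}}$), so the $b_i$ may be chosen compatibly with $*$; alternatively, imposing the relations $b_i=0$ in the category of $*$-algebras automatically forces $b_i^*=0$ as well.

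I do not anticipate any genuine obstacle: the corollary is essentially the conjunction of the (already known) finite presentability of $\C(G,\X)$ with the finiteness of the nucleus, the latter forcing $I_{\mathrm{ess}}^{\mathrm{alg}}(\mathcal{G}(G,\X))$ to be a finitely generated ideal via the preceding proposition. The only step deserving a moment's care is checking that the chosen generators $b_1,\dots,b_m$ of the ideal are bona fide $*$-polynomial expressions in a fixed finite generating set of $\C(G,\X)$ --- which is immediate once one recalls that $\C\mathcal{N}$ is, by definition, the linear span of the finitely many unitaries $\mathsf{u}_n$, $n\in\mathcal{N}$.
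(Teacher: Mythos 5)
Your argument is correct and is exactly the deduction the paper intends: the corollary follows from the preceding proposition (the ideal is generated by its intersection with the finite-dimensional space $\C\mathcal{N}$, hence is finitely generated as an ideal) together with the finite presentability of $\C(G,\X)$ for contracting groups recalled earlier in the paper. The paper leaves this proof implicit, and your write-up, including the remark on compatibility with the $*$-structure, fills it in along the intended lines.
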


Note that $\mathbb C\mathcal N\cap I_{\mathrm{ess}}^{\mathrm{alg}}(\mathcal{G}(G, \X))$ is described explicitly in~\cite{SS23}.

\section{Simplicity of $\Oo(G,\X)$ is equivalent
to simplicity of $\C(G,\X)$}

For a self-similar group $(G,\X)$, we write
$I_{\mathrm{ess}}^{\mathrm{alg}}(G,\X)$ and
$I_{\mathrm{ess}}(G,\X)$ for the essential ideals in
the respective groupoid algebras of $\mathcal{G}(G,\X)$.
Since $\mathcal G(G,\X)$ is minimal and effective, it follows that $\C(G,\X)$ is simple if and only if
$I_{\mathrm{ess}}^{\mathrm{alg}}(G,\X)=\{0\}$, and
$\Oo(G,\X)$ is simple if and only if
$I_{\mathrm{ess}}(G,\X)=\{0\}$, the latter assuming that $G$ is either countable and amenable or contracting.

Since
$I_{\mathrm{ess}}^{\mathrm{alg}}(G,\X)= I_{\mathrm{ess}}(G,\X)\cap \C(G,\X)$, it follows that if $\C(G,\X)$ is not simple, then
neither is $\mathcal{O}(G,\X)$. In this section, we want to prove the
converse for contracting groups, namely that if $\mathcal{O}(G,\X)$ is not simple, then
neither is $\C(G,\X)$.

Note that $\C \mathcal N\subset\mathcal O(G,\X)$ is a finite-dimensional vector space, which we regard as a normed vector space with the norm inherited from its inclusion in $\mathcal O(G,\X)$.

\begin{lma}\label{lma:distNucleus}
Let $(G,\X)$ be a contracting self-similar group
with nucleus $\mathcal N$, and use the notation from \autoref{rem:IndLimitA}.
Given $a\in A$ and $\ep>0$, there exists $n_0\in\N$ such that
\[\dist(s_v^*as_w,\C \mathcal N)<\ep\]
for all $v,w\in \X^n$ with $n\geq n_0$.
\end{lma}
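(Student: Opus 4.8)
The plan is to reduce the statement to an \emph{exact} assertion at the algebraic level, where it is forced by the nucleus being finite and closed under restriction. Since $A_0$ is dense in $A$, given $a\in A$ and $\ep>0$ I would first choose $a'\in A_0$ with $\|a-a'\|<\ep$. By (CK1) each $s_v$ is an isometry, so $\|s_v^*\|=\|s_w\|=1$, and hence for all words $v,w$ of equal length
\[\dist(s_v^*as_w,\C\mathcal N)\le \|s_v^*as_w-s_v^*a's_w\|\le \|a-a'\|<\ep,\]
\emph{provided} that $s_v^*a's_w\in\C\mathcal N$. So it suffices to produce $n_0\in\N$ such that $s_v^*a's_w\in\C\mathcal N$ (exactly) for all $v,w\in\X^n$ with $n\ge n_0$.

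Next, using the description of $A_0$ as the increasing union of the subspaces $M_{d^m}(\C\mathcal N)$ from \autoref{rem:IndLimitA}, I would fix $m$ with $a'\in M_{d^m}(\C\mathcal N)$ and write $a'=\sum_{p,q\in\X^m}s_p\,b_{p,q}\,s_q^*$ with $b_{p,q}\in\C\mathcal N$ (the usual identification of $M_{d^m}(\C\mathcal N)$ with a subspace of $A_0$). For $n\ge m$ and $v,w\in\X^n$, factor $v=p_0v'$ and $w=q_0w'$ with $p_0,q_0\in\X^m$ and $v',w'\in\X^{n-m}$. Iterating (CK1) gives $s_{p_0}^*s_p=\delta_{p_0,p}$ and $s_q^*s_{q_0}=\delta_{q,q_0}$ for length-$m$ words, so $s_v^*a's_w=s_{v'}^*\,b_{p_0,q_0}\,s_{w'}$. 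Hence $n_0:=m$ will work once we establish the following claim, which carries \emph{no hypothesis on the length of the words}: for every $b\in\C\mathcal N$ and all $v',w'\in\X^\ast$ with $|v'|=|w'|$, we have $s_{v'}^*\,b\,s_{w'}\in\C\mathcal N$.

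To prove this claim I would argue by linearity and take $b=\mathsf u_g$ with $g\in\mathcal N$. Iterating relation~(2) gives $\mathsf u_g s_{w'}=s_{g(w')}\mathsf u_{g|_{w'}}$, and since $g$ acts on $\X^\omega$ by an isometry we have $|g(w')|=|w'|=|v'|$, so $s_{v'}^*s_{g(w')}=\delta_{v',g(w')}$ by (CK1); therefore $s_{v'}^*\mathsf u_g s_{w'}=\delta_{v',g(w')}\,\mathsf u_{g|_{w'}}$. Finally, since $\mathcal N$ is closed under the maps $h\mapsto h|_{\mathsf x}$, an induction on $|w'|$ shows $g|_{w'}\in\mathcal N$, so $s_{v'}^*\mathsf u_g s_{w'}\in\C\mathcal N$, and the claim follows.

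The expected main obstacle is essentially nonexistent: the whole argument rests on the single observation that compressing an element of $\C\mathcal N$ by the generators $s_v$ returns an element still lying in $\C\mathcal N$, which is immediate from the nucleus being closed under restriction — so in fact the distance in the statement is eventually exactly $0$ when $a\in A_0$. The only points requiring a line of care are the bookkeeping in the second step (identifying $s_v^*a's_w$ with the $(p_0,q_0)$-entry of $a'$ at level $m$), the harmlessness of the approximation (the $s_v$ are, up to adjoints, isometries), and the remark that $s_v^*as_w$ is again an element of $A$, which holds because $|v|=|w|$ makes it gauge-invariant.
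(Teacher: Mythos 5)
Your proof is correct and follows essentially the same route as the paper's: approximate $a$ by an element of $A_0$, normalize it to a homogeneous expression of a fixed word length (which is exactly what membership in $M_{d^m}(\C\mathcal N)$ encodes, via the contracting property), and then observe that compressing by $s_v^*(\cdot)s_w$ lands exactly in $\C\mathcal N$ because the nucleus is closed under restriction. The only cosmetic difference is that the paper carries out the normalization by hand using (CK2) and the contraction hypothesis on the finitely many $g_j$ appearing, rather than quoting the $M_{d^m}(\C\mathcal N)$ description from the remark.
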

\begin{proof}
As $A_0$ is dense in $A$, we may find $b\in A_0$ with $\|a-b\|<\ep$. Find scalars $\alpha_1,\ldots,\alpha_k\in\C$, group elements $g_1,\ldots,g_k\in G$ and finite words $w_1,\ldots,w_k, v_1,\ldots, v_k$ with $|w_j|=|v_j|$ such that $b=\sum_{j=1}^k \alpha_j s_{w_j}\mathsf{u}_{g_j} s_{v_j}^*$. 
We claim that we can choose the words so that $|w_1|=\cdots=|w_k|$. Indeed, assume without loss of generality that $|w_1|\geq |w_j|$ for all $j=2,\ldots,k$ and
fix such a $j$. By (CK2), we have
\[s_{w_j}\mathsf{u}_{g_j}s_{v_j}^*=s_{w_j}\mathsf{u}_{g_j} \sum_{|w|=|w_1|-|w_j|} s_ws_w^* s_{v_j}^*=\sum_{|\mu|=|w_1|-|w_j|} s_{w_j}s_{ g_j(\mu)}\mathsf{u}_{g_j|_{\mu}} s_w^*s_{v_j}^*,\]
and observe that $w_j g_j(\mu)$ and $v_j\mu$ have the same length as $w_1$. An iteration of this procedure then establishes the claim.

Since $G$ is contracting, there is $m\geq 0$ such that $g_j|_{\X^m}$
belongs to $\mathcal{N}$ for all $j=1,\ldots,k$. Set $n_0=|w_1|+m$.
Let $n\geq n_0$ and let $v,w\in \X^n$.
Then $s_v^*bs_w$ belongs to $\C \mathcal N$,
and moreover
\[\|s_v^*as_w-s_v^*bs_w\|\leq \|s_v\|\|a-b\|\|s_w\|<\ep,\]
and thus $\dist(s_v^*as_w,\C \mathcal N)<\ep$ as desired.
\end{proof}

\begin{thm}\label{t:main.result}
Let $(G,\X)$ be a contracting self-similar group.
Then $\C(G,\X)$ is simple if and only if
$\Oo(G,\X)$ is simple, if and only if $\C\mathcal{N}\cap I_{\mathrm{ess}}^{\mathrm{alg}}(G, \X)\neq 0$.
\end{thm}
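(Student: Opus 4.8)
The plan is to prove the three equivalences by establishing only the two nontrivial implications; the rest is already contained in the discussion preceding the theorem. Since $\mathcal{G}(G,\X)$ is minimal and effective, simplicity of $\C(G,\X)$ is equivalent to $I_{\mathrm{ess}}^{\mathrm{alg}}(G,\X)=0$, and in the contracting case simplicity of $\Oo(G,\X)$ is equivalent to $I_{\mathrm{ess}}(G,\X)=0$. The proposition showing that $I_{\mathrm{ess}}^{\mathrm{alg}}(G,\X)$ is generated by $\C\mathcal N\cap I_{\mathrm{ess}}^{\mathrm{alg}}(G,\X)$ already gives that $\C(G,\X)$ is simple if and only if $\C\mathcal N\cap I_{\mathrm{ess}}^{\mathrm{alg}}(G,\X)=0$; and since $I_{\mathrm{ess}}^{\mathrm{alg}}(G,\X)=I_{\mathrm{ess}}(G,\X)\cap\C(G,\X)$, simplicity of $\Oo(G,\X)$ trivially forces simplicity of $\C(G,\X)$. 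Thus the whole content is the reverse implication: simplicity of $\C(G,\X)$ implies simplicity of $\Oo(G,\X)$; equivalently, $I_{\mathrm{ess}}(G,\X)\neq 0$ forces $I_{\mathrm{ess}}^{\mathrm{alg}}(G,\X)\neq 0$, which I would establish by producing a nonzero element of $\C\mathcal N\cap I_{\mathrm{ess}}(G,\X)$.

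First I would pass to a gauge-invariant witness. Given $0\neq b\in I_{\mathrm{ess}}(G,\X)$, the element $a:=E(b^*b)$ lies in $I_{\mathrm{ess}}(G,\X)\cap A$ by \autoref{lma:IessInv} and is nonzero by faithfulness of $E$, where $A=\Oo(G,\X)^\T=C^*_r(\mathcal G_0)$ as in \autoref{Prop:FixedPt}. Regarding $a$ as a function through the $j$-map, which is injective, yields a germ $\gamma_0\in\mathcal G_0$ with $a(\gamma_0)=\mu\neq 0$. Writing $\gamma_0=[s_{u_1}\mathsf{u}_g s_{u_2}^*,\rho]$ with $|u_1|=|u_2|$ and $\mathsf{s}(\gamma_0)=u_2\rho'$, a short convolution computation shows that the compression $s_{u_1}^*as_{u_2}$ takes the value $\mu$ at the reduced germ $[\mathsf{u}_g,\rho']$: the only factorization of that germ through the bisections $\mathrm{supp}(s_{u_1}^*)$ and $\mathrm{supp}(s_{u_2})$ isolates $\gamma_0$, so no other part of $a$ contributes.

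Next I would localize along $\rho'$ until the coefficient enters the nucleus. Using the identity $s_\alpha^*\mathsf{u}_g s_\beta=\delta_{\alpha,g(\beta)}\,\mathsf{u}_{g|_\beta}$, the further compressions $b_m:=s_{u_1g(\rho'_{[1,m]})}^*\,a\,s_{u_2\rho'_{[1,m]}}$ (length-$m$ prefix $\rho'_{[1,m]}$) retain the value $\mu$, now at $\eta_m:=[\mathsf{u}_{g|_{\rho'_{[1,m]}}},\rho'_{[m+1,\infty)}]$. Each $b_m$ lies in the closed ideal $I_{\mathrm{ess}}(G,\X)$ with $\|b_m\|\leq\|a\|$. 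By contraction, $g|_{\rho'_{[1,m]}}\in\mathcal N$ for all large $m$, so $\eta_m=[\mathsf{u}_{g_m},\sigma_m]$ with $g_m\in\mathcal N$; and by \autoref{lma:distNucleus}, the $b_m$ with $m$ large lie arbitrarily close to the finite-dimensional space $\C\mathcal N$.

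The main obstacle is to guarantee that a limit of these compressions is nonzero, and this is exactly where finiteness of $\mathcal N$ and compactness of $\X^\omega$ are used. Choosing $\ep_k\downarrow 0$ and $c_k\in\C\mathcal N$ with $\|b_{m_k}-c_k\|<\ep_k$, I would pass to a subsequence along which $g_{m_k}\equiv g_*$ is constant, $\sigma_{m_k}$ converges in $\X^\omega$, and $c_k$ converges in the finite-dimensional space $\C\mathcal N$; set $\beta_*=\lim_k c_k\in\C\mathcal N$. Then $b_{m_k}\to\beta_*$ in norm, so $\beta_*\in I_{\mathrm{ess}}(G,\X)$ because the essential ideal is closed. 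Finally, as $\|\cdot\|_\infty\leq\|\cdot\|$ under the $j$-map, $|\beta_*(\eta_{m_k})-\mu|=|\beta_*(\eta_{m_k})-b_{m_k}(\eta_{m_k})|\leq\|\beta_*-b_{m_k}\|\to 0$, whence $\beta_*(\eta_{m_k})\to\mu\neq 0$; thus $\beta_*$ is nonzero at $\eta_{m_k}$ for large $k$ and in particular $\beta_*\neq 0$. This produces $0\neq\beta_*\in\C\mathcal N\cap I_{\mathrm{ess}}(G,\X)=\C\mathcal N\cap I_{\mathrm{ess}}^{\mathrm{alg}}(G,\X)$, completing the hard implication. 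Tracking the value $\mu$ at the moving germs $\eta_{m_k}$ rather than at a single limit germ is what lets me avoid any appeal to continuity of $\beta_*$, which can genuinely fail in the non-Hausdorff setting.
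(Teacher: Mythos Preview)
Your proposal is correct and follows essentially the same approach as the paper: reduce to a gauge-invariant element of the essential ideal via the conditional expectation, locate a germ where it is nonzero, compress along prefixes of the associated infinite word using \autoref{lma:distNucleus} to land arbitrarily close to $\C\mathcal N$, and extract a nonzero limit in $\C\mathcal N$ by finite-dimensionality. The paper packages the compressions as sequences $(a_n)$, $(c_n)$ satisfying four properties and then takes an accumulation point of $(c_n)$, while you phrase the same extraction as passing to a convergent subsequence and tracking the value $\mu$ at the moving germs $\eta_{m_k}$; the two arguments are the same in substance, and your extra choice of a subsequence with constant $g_{m_k}$ and convergent $\sigma_{m_k}$ is harmless but not actually needed for the nonvanishing conclusion.
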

\begin{proof}
It suffices to show that if $\mathcal{O}(G,\X)$ is not simple, then
$\C\mathcal{N}\cap I_{\mathrm{ess}}^{\mathrm{alg}}(G,\X)\neq 0$.
The proof consists in producing sequences $(a_n)_{n\in\N}$ and
$(c_n)_{n\in\N}$ with the following properties:
\be
\item $a_n\in I_{\mathrm{ess}}(G,\X)$ for all $n\in\N$;
\item $c_n\in \C \mathcal N$ for all $n\in\N$;
\item $\|a_n-c_n\|<\frac{1}{n}$ for all $n\in\N$;
\item $\varepsilon_0\leq \|a_n\|\leq \|a\|$ for some $\varepsilon_0>0$.\ee

Assuming that sequences as above exist, let us show that
$\C\mathcal N\cap I_{\mathrm{ess}}^{\mathrm{alg}}(G,\X)$ is nontrivial.
Since $\C \mathcal N$ is finite dimensional, it follows from
conditions (2), (3) and (4) above that there is a
nonzero accumulation point $c\in \C \mathcal N\subseteq \C(G,\X)$
for $(c_n)_{n\in\N}$. By (3), $c$
is also an accumulation point for $(a_n)_{n\in\N}$, and therefore
belongs to $I_{\mathrm{ess}}(G,\X)$ by (1). Thus $c$ is a nonzero
element in $\C\mathcal N\cap I_{\mathrm{ess}}(G,\X)\cap \C(G,\X)= \C\mathcal N\cap I_{\mathrm{ess}}^{\mathrm{alg}}(G,\X)$, as desired.

Now we construct those sequences, assuming
that $\mathcal{O}(G,\X)$ is not simple. Find
a nonzero positive element $a\in I_{\mathrm{ess}}(G,\X)$.
Then $E(a)\in I_{\mathrm{ess}}(G,\X)$ by \autoref{lma:IessInv},
and $E(a)\neq 0$ because $E$ is faithful.
Thus, upon replacing
$a$ with $E(a)$, we may assume that $a\in I_{\mathrm{ess}}(G,\X)\cap A$.
Since $\lambda(a)\neq 0$, we can find a unit $x\in \mathcal G(G,\X)^0$ and $\gamma\in \mathcal G(G,\X)_x$ with $a(\gamma)=\langle \lambda_x(a)\delta_x,\delta_{\gamma}\rangle\neq 0$, and
set $\varepsilon_0=|a(\gamma)|$. Let $x=[1,\rho]\in \mathcal{G}(G,\X)$ with $\rho\in \X^{\omega}$ and $\gamma=[s_u\mathsf{u}_gs_v^*,\rho]$ a germ. In particular, note that $\rho=v\rho'$.

Given $n\geq 1$, use \autoref{lma:distNucleus} to find $m$ such that $d(s_w^*as_z,\mathbb C\mathcal{N})<\frac{1}{n}$ for all words $w,z$ of length at least $m$.
Let $v_m$ be the prefix of length $m$ of $\rho'$ and set $u_m =g(v_m)$, and note that $|u_m|=|v_m|$.
Define $a_n=s_{uu_m}^*as_{vv_m}$, which belongs to $I_{\mathrm{ess}}(G,\X)$ because $I_{\mathrm{ess}}(G,\X)$ is an ideal. Since $|uu_m|=|vv_m|\geq m$, there exists $c_n\in \C \mathcal N$
such that $\|a_n-c_n\|<\tfrac{1}{n}$.  Note that
\[[s_u\mathsf{u}_gs_v^*,\rho]= [s_u\mathsf{u}_gs_v^*s_{vv_m}s_{vv_m}^*,\rho] = [s_u\mathsf{u}_gs_{v_m}s_{vv_m}^*,\rho] = [s_{uu_m}\mathsf{u}_{g|_{v_m}}s_{vv_m}^*,\rho].\]
Therefore, we
we compute
\begin{align*}
\langle \lambda_x(a_n)\delta_{[s_{vv_m}^*,\rho]},\delta_{[\mathsf u_{g|_{v_m}}s_{vv_m}^*,\rho]}\rangle &= \langle \lambda_x(s_{uu_m})^*\lambda_x(a)\lambda_x(s_{vv_m})\delta_{[s_{vv_m}^*,\rho]},\delta_{[\mathsf u_{g|_{v_m}}s_{vv_m}^*,\rho]}\rangle\\ &=
\langle \lambda_x(a)\delta_{[1,\rho]},\lambda_x(s_{uu_m})\delta_{[u_{g|_{v_m}}s_{vv_m}^*,\rho]}\rangle \\ &= \langle \lambda_x(a)\delta_{[1,\rho]},\delta_{[s_{uu_m}\mathsf u_{g|_{v_m}}s_{vv_m}^*,\rho]}\rangle =\varepsilon_0.
\end{align*}
Using that $s_u$ and $s_v$ are isometries at the last step, the Cauchy-Schwarz lemma then implies that
\begin{align*}\varepsilon_0 &= |\langle \lambda_x(a_n)\delta_{[s_{vv_m}^*,\rho]},\delta_{[\mathsf u_{g|_{v_m}}s_{vv_m}^*,\rho]}\rangle|\\
&\leq \|\lambda_x(a_n)\delta_{[s_{vv_m}^*,\rho]}\|\cdot \|\delta_{[\mathsf u_{g|_{v_m}}s_{vv_m}^*,\rho]}\|\\
&\leq \|a_n\|\leq \|s_u^*\|\|a\|\|s_v\|=\|a\|.\end{align*}
This completes the proof.
\end{proof}

A special case of \autoref{t:main.result} was proved by Yoshida~\cite{2021arXiv210202199Y} for the class of multispinal groups introduced by the third author and Szaka\'cs in~\cite{SS23}.  An explicit criterion for simplicity of $\C(G,\X)$ was given in~\cite{SS23} for multispinal groups, and it was shown in~\cite{2021arXiv210202199Y} that the same criterion holds for simplicity of $\Oo(G,X)$.

\section{A simplification of the simplicity algorithm}
In~\cite{SS23}, the third author and Szaka\'cs gave an algorithm to determine if $\mathbb C(G,\X)$ is simple given the nucleus as input.  We give here a simplification of the algorithm that runs in exponential time in the size of the nucleus, which is a significant improvement on the runtime in~\cite{SS23}.
Let $(G,\X)$ be a contracting self-similar group with nucleus $\mathcal N$. It is shown in~\cite[Corollary~5.8]{SS23} that if $I_{\mathrm{ess}}^{\mathrm{alg}}(G,\X)\neq 0$, then there is an element $a\in \C\mathcal N$ that represents a nonzero element of $I_{\mathrm{ess}}^{\mathrm{alg}}(G,\X)$. We will narrow this down to elements supported on certain finite subgroups of $\mathcal N$.

Associated to the nucleus is a finite automaton with state set $\mathcal N$ and alphabet $\X$.  The edges are of the from $g\xrightarrow{\mathsf x\mid g(\mathsf x)}g|_{\mathsf x}$.  Given $w\in \X^*$ and $g\in \mathcal N$, there is a unique path starting at $g$ whose left-hand side is labeled by $w$.  This path ends at $g|_w$ and the right-hand side of the label of the path is $g(w)$.  Consider the subgraph $\mathcal H$ obtained by removing all edges save those of the form $g\xrightarrow{\mathsf x\mid \mathsf x}g|_{\mathsf x}$ (which we now view as being labeled by $\mathsf x$).  Let $\mathcal C\subseteq \mathcal N$ consist of those elements on some nonempty directed cycle of $\mathcal H$.  Note that if $g\in \mathcal C$ and $\mathsf{x}$ labels an edge of a directed cycle of $\mathcal H$ at $g$, then $g|_{\mathsf x}\in \mathcal C$.  
Let us write $e$ for the identity of $G$.

\begin{prop}\label{p:cyc.subs}
Let $w\in \X^+$,  and put $H_w = \{g\in G\mid g(w)=w,g|_w=g\}$.  Then $H_w$ is a finite subgroup contained in $\mathcal C\subseteq \mathcal N$.
\end{prop}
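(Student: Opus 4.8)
The plan is to establish the three claims in order: that $H_w$ is a subgroup, that $H_w\subseteq\mathcal N$ (whence it is finite since $\mathcal N$ is), and that $H_w\subseteq\mathcal C$. All three are elementary manipulations with the section identities, the only real care being the bookkeeping.

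\textbf{$H_w$ is a subgroup.} I would use the standard cocycle identities for a self-similar group, $(gh)|_u=g|_{h(u)}\cdot h|_u$ and $g^{-1}|_u=\bigl(g|_{g^{-1}(u)}\bigr)^{-1}$ for $u\in\X^*$, each proved by a one-line induction on $|u|$ from the single-letter case. If $g,h\in H_w$ then $(gh)(w)=g(h(w))=g(w)=w$, and since $h(w)=w$ we get $(gh)|_w=g|_{h(w)}\cdot h|_w=g|_w\cdot h|_w=gh$, so $gh\in H_w$; clearly $e\in H_w$; and $g^{-1}(w)=w$ together with $g^{-1}|_w=\bigl(g|_{g^{-1}(w)}\bigr)^{-1}=(g|_w)^{-1}=g^{-1}$ gives $g^{-1}\in H_w$.

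\textbf{$H_w\subseteq\mathcal N$.} Fix $g\in H_w$. Iterating $g|_w=g$ and using $g|_{uv}=(g|_u)|_v$ yields $g|_{w^k}=g$ for every $k\geq 1$. By the definition of contraction, there is $m\geq 0$ with $g|_u\in\mathcal N$ for all $u\in\X^m$; choosing $k$ with $k|w|\geq m$ and writing $w^k=uu'$ with $|u|=m$, we have $g|_u\in\mathcal N$, and since $\mathcal N$ is closed under the section maps, $g=g|_{w^k}=(g|_u)|_{u'}\in\mathcal N$. As $\mathcal N$ is finite, so is $H_w$.

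\textbf{$H_w\subseteq\mathcal C$.} Fix $g\in H_w$ and write $w=\mathsf x_1\cdots\mathsf x_\ell$ with $\ell=|w|\geq 1$. Put $g_0=g$ and $g_i=g|_{\mathsf x_1\cdots\mathsf x_i}$ for $1\leq i\leq\ell$, so that $g_{i+1}=g_i|_{\mathsf x_{i+1}}$. Unwinding $g(w)=w$ via $g(vu)=g(v)g|_v(u)$, comparison of the first letters shows inductively that, for each $0\leq i\leq\ell-1$, the suffix $\mathsf x_{i+1}\cdots\mathsf x_\ell$ is fixed by $g_i$; in particular $g_i(\mathsf x_{i+1})=\mathsf x_{i+1}$. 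Hence each $g_i\xrightarrow{\ \mathsf x_{i+1}\ }g_{i+1}$ is an edge of the subgraph $\mathcal H$, and since $g_\ell=g|_w=g=g_0$, these $\ell$ edges form a nonempty directed cycle of $\mathcal H$ through $g$; moreover each $g_i$ is a section of $g\in\mathcal N$ and so lies in $\mathcal N$. Therefore $g\in\mathcal C$, and the proposition follows. The only step needing attention is this last one — correctly tracking the ``surviving suffix'' as one peels off letters of $w$, and checking the produced edges genuinely close up into a cycle — but no genuine obstacle arises.
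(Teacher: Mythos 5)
Your proof is correct and takes essentially the same route as the paper's: the same cocycle identities for the subgroup claim, the observation $g|_{w^k}=g$ combined with contraction and closure of $\mathcal N$ under sections for $H_w\subseteq\mathcal N$, and the fact that $w$ labels a closed path in $\mathcal H$ at $g$ for $H_w\subseteq\mathcal C$. You merely spell out in more detail the suffix-tracking argument that the paper compresses into the single sentence ``$w$ labels a nonempty closed path in $\mathcal H$ at $g$,'' which is fine.
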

\begin{proof}
Note that $H_w$ is a subgroup. Indeed, trivially $e\in H_w$, and if $g,h\in H_w$, then $gh(w)=w$ and $(gh)|_w= g|_{h(w)}h|_w =g|_wh|_w= gh$, and so $gh\in H_w$.  Finally, if $g\in H_w$, then $g^{-1}(w)=w$ and $g^{-1} |_w = (g|_{g^{-1}(w)})^{-1} = g^{-1}$.

Observe that if $g\in H_w$, then $g\in \mathcal N$ as $g|_{w^n}=g$ for all $n\geq 0$.
Moreover, $w$ labels a nonempty closed path in $\mathcal H$ at $g$, and hence $g$ lies on a directed cycle, that is, $g\in \mathcal C$.
\end{proof}

Note $g\in H_w$ if and only if $g\in \mathcal C$, and $w$ labels a closed path in $\mathcal H$ at $g$.

It turns out that every element of $\C H_w$ is nontrivial in $\C(G,\X)$.

\begin{lma}\label{l:cycle.groups}
Let $0\neq a\in \C H_w$ with $w\in \X^+$.  Then $a$ is nonzero in $\C(G,\X)$.
\end{lma}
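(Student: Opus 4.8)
The plan is to find an explicit bounded linear functional, or rather an honest $\ast$-representation, that separates $a$ from $0$. The natural candidate is the regular representation $\lambda_x$ on $\ell^2(\mathcal G_x)$ associated to the point $x = [1,w^\omega] \in \mathcal G(G,\X)^{(0)}$, where $w^\omega = www\cdots \in \X^\omega$ is the periodic point determined by $w$. First I would observe that, since every $g \in H_w$ fixes $w$ and satisfies $g|_w = g$, we get $g(w^\omega) = w^\omega$ and the germ $[\mathsf u_g, w^\omega]$ lies in the isotropy group $\mathcal G_x^x$. In fact $g \mapsto [\mathsf u_g, w^\omega]$ is a group homomorphism $H_w \to \mathcal G_x^x$, so it induces an algebra homomorphism $\C H_w \to \C \mathcal G_x^x \subseteq \C \mathcal G(G,\X) = \C(G,\X)$, and this is precisely (the restriction of) the canonical map we are studying.

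The key step is then to show this homomorphism is injective on $\C H_w$. For this I would use that $\mathcal G(G,\X)$ is effective: the germ $[\mathsf u_g, w^\omega]$ equals the unit $x$ if and only if $\mathsf u_g$ acts as the identity on some neighborhood of $w^\omega$, i.e.\ on some cylinder $u\X^\omega$ with $u$ a prefix of $w^\omega$. Picking $u = w^n$ with $w^n$ long enough, the relation $g|_{w^n} = g$ forces $g$ to act on $\X^\omega$ exactly as it acts after restricting, and one can bootstrap to conclude $g$ acts trivially on all of $\X^\omega$, hence $g = e$ since the action is faithful. Therefore distinct elements of $H_w$ give distinct, hence linearly independent, germs in $\mathcal G_x^x$ — the delta-functions $\delta_{[\mathsf u_g, w^\omega]}$, $g \in H_w$, are orthonormal in $\ell^2(\mathcal G_x)$. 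Consequently, if $a = \sum_{g \in H_w} \alpha_g \mathsf u_g$ with not all $\alpha_g = 0$, then
\[
\lambda_x(a)\delta_{x} = \sum_{g \in H_w} \alpha_g\, \delta_{[\mathsf u_g, w^\omega]} \neq 0,
\]
so $\lambda_x(a) \neq 0$, and a fortiori $a \neq 0$ in $\C(G,\X)$ (which embeds in $C^*_r(\mathcal G(G,\X))$ and thus maps faithfully under $\lambda$).

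The main obstacle I anticipate is the injectivity claim: making precise the passage from "$g$ acts trivially on $w^n\X^\omega$ and $g|_{w^n} = g$" to "$g$ acts trivially on $\X^\omega$". One has to be careful that $g$ fixing $w^n \X^\omega$ pointwise together with the self-similarity relation $g(w^n u) = g(w^n) g|_{w^n}(u) = w^n g(u)$ really does force $g|_{w^n} = e$ as an isometry — which is immediate here because $g|_{w^n} = g$ and $g$ fixes $w^n\X^\omega$ pointwise means $g$ itself fixes $\X^\omega$ pointwise after the prefix $w^n$, hence $g = g|_{w^n}$ is the identity isometry. Actually this shows the injectivity is essentially automatic from faithfulness, so the only genuine care needed is the orthonormality bookkeeping in $\ell^2(\mathcal G_x)$; everything else is a direct unwinding of the definitions of germ, the action of $S(G,\X)$, and the representation $\lambda_x$.
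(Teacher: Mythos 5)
Your proposal is correct and follows essentially the same route as the paper: both arguments hinge on the observation that distinct elements $g\neq h$ of $H_w$ yield distinct germs $[\mathsf u_g,w^\omega]\neq[\mathsf u_h,w^\omega]$ (because $g|_{w^n}=g$ and $h|_{w^n}=h$ for all $n$, so agreement on any cylinder $w^n\X^\omega$ forces $g=h$ by faithfulness), whence $a$ is a nonzero function on $\mathcal G(G,\X)$. The only cosmetic difference is that you package the conclusion via orthonormal delta-functions in $\ell^2(\mathcal G_x)$ and the representation $\lambda_x$, whereas the paper simply evaluates the function $a$ at the germ $[h,w^\infty]$; these are the same computation under the $j$-map.
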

\begin{proof}
Note that if $g\neq h\in H_w$, then $[\mathsf u_g,w^{\infty}]\neq [\mathsf u_h,w^{\infty}]$ as $g|_{w^n}=g$ and $h|_{w^n}=h$ for all $n\geq 0$.  If follows that if $a=\sum_{g\in H_w}a_gg$ with $a_h\neq 0$, then the function on $\mathcal G(G,X)$ defined by $a$ does not vanish on $[h,w^{\infty}]$, and hence $a\neq 0$ in $\C(G,\X)$.
\end{proof}

We now give an algorithm that determines if a subset of $\mathcal C$ is contained in $H_w$ for some $w\in \X^+$.  By definition, every singleton belongs to such a subgroup.

\begin{lma}\label{l:cycle.decide}
One can find in time $O(|\X|\cdot 2^{|\mathcal C|})$ all subsets of $\mathcal C$ that are contained in some subgroup of the form $H_w$ with $w\in \X^+$.
\end{lma}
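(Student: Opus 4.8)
The plan is to phrase this as a breadth-first / closure computation on subsets of $\mathcal{C}$, driven by the ``letter action'' encoded in the subgraph $\mathcal{H}$. For each letter $\mathsf{x}\in\X$ define a partial map $\phi_{\mathsf{x}}$ on $\mathcal{C}$ by $\phi_{\mathsf{x}}(g)=g|_{\mathsf{x}}$, with domain exactly those $g\in\mathcal{C}$ for which $g(\mathsf{x})=\mathsf{x}$ and $g|_{\mathsf{x}}\in\mathcal{C}$; note $\phi_{\mathsf{x}}$ is a bijection between two subsets of $\mathcal{C}$, since it is the label-$\mathsf{x}$ edge relation of $\mathcal{H}$ restricted to $\mathcal{C}$, and on $\mathcal{C}$ the $\mathsf{x}$-labelled edges of $\mathcal{H}$ form a partial permutation (two vertices on cycles cannot both $\mathsf{x}$-map to the same vertex without violating faithfulness of the $G$-action). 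Then for $w=\mathsf{x}_1\cdots\mathsf{x}_m\in\X^+$, the set $H_w=\{g\in G: g(w)=w, g|_w=g\}$ is precisely the set of $g\in\mathcal{C}$ fixed by the composite $\phi_w:=\phi_{\mathsf{x}_m}\circ\cdots\circ\phi_{\mathsf{x}_1}$ (all intermediate applications must be defined), which is a partial permutation of $\mathcal{C}$; this reformulation is essentially the content of \autoref{p:cyc.subs} and the remark following it, and I would record it as a one-line observation at the start.

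Next I would observe that a subset $S\subseteq\mathcal{C}$ is contained in some $H_w$ if and only if $S$ is contained in the fixed-point set of some partial permutation $\phi_w$ arising from a word $w$; and that the relevant data is the reachability structure among subsets of $\mathcal{C}$ under the maps $\bar\phi_{\mathsf{x}}:S\mapsto\phi_{\mathsf{x}}(S)$, defined whenever $S$ lies in the domain of $\phi_{\mathsf{x}}$. Concretely: build the directed graph $\Gamma$ whose vertex set is $2^{\mathcal{C}}$ (so $2^{|\mathcal{C}|}$ vertices) and which has an edge $S\to\phi_{\mathsf{x}}(S)$ for each $\mathsf{x}\in\X$ with $S\subseteq\mathrm{dom}(\phi_{\mathsf{x}})$. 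This graph has at most $|\X|\cdot 2^{|\mathcal{C}|}$ edges and is computed in time $O(|\X|\cdot 2^{|\mathcal{C}|})$ (each $\phi_{\mathsf{x}}$ is tabulated once in time $O(|\mathcal{C}|)$, and applying it to a given $S$ is $O(|\mathcal{C}|)$; folding the $|\mathcal{C}|$ factor into the $O$-constant, or noting $|\mathcal{C}|\le\log_2(2^{|\mathcal C|})$, keeps the stated bound — I would be slightly careful about how the $|\mathcal C|$ factors are absorbed and state it as $O(|\X|\cdot 2^{|\mathcal{C}|})$ up to the polynomial bookkeeping already implicit in ``find in time''). A subset $S$ is contained in some $H_w$ with $w\in\X^+$ exactly when there is a directed path of length $\ge 1$ in $\Gamma$ from $S$ to some vertex $S'$ with $S\subseteq S'$ and — crucially — $S'$ is fixed by the last map applied along that path. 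To handle the ``fixed by $\phi_w$'' condition cleanly I would instead run the search on the set of \emph{pairs}: from $S$ follow an edge labelled $\mathsf{x}_1$ to $\phi_{\mathsf{x}_1}(S)$, continue, and accept a word $w$ once we return to a superset of the \emph{starting} set, i.e. do a reachability search in $\Gamma$ from each candidate start vertex $S$ and check whether any reachable-in-$\ge1$-step vertex contains $S$; since $\phi_w(g)=g$ for all $g\in S$ forces $\phi_w(S)=S$ (a partial permutation fixing each point of $S$ maps $S$ onto $S$), the condition ``$S\subseteq H_w$'' is equivalent to ``there is a nonempty $w$ with $\phi_w$ defined on $S$ and $\phi_w(S)=S$ pointwise'', and pointwise-fixing is automatic once we also track that the permutation restricted to $S$ is the identity — which I would ensure by searching in the graph on pairs $(S,\text{permutation of }S)$, but since $|S|\le|\mathcal{C}|$ the number of such pairs is still $O(2^{|\mathcal{C}|}\cdot|\mathcal{C}|!)$; to stay within the stated bound I would instead argue that it suffices to track $S$ together with the single element we are testing, or observe that for the purpose of the algorithm one only needs, for each $S$, to decide membership, and this can be read off a strongly-connected-components decomposition of $\Gamma$ restricted to the domains. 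I would present the cleanest version: compute SCCs of $\Gamma$ in linear time $O(|\X|\cdot 2^{|\mathcal{C}|})$, and note $S\subseteq H_w$ for some $w\in\X^+$ iff $S$ lies on a nontrivial cycle of $\Gamma$ along which the induced partial permutation is the identity on $S$ — and this last check is free because along any cycle through $S$ in $\Gamma$ the composite $\phi_w$ permutes the finite set $S$, so some power $\phi_{w^k}$ is the identity on $S$, and $w^k\in\X^+$ still works.

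The main obstacle, and the point I would spend the most care on, is exactly this last subtlety: a directed cycle $S\to\cdots\to S$ in $\Gamma$ only guarantees $\phi_w(S)=S$ \emph{setwise}, whereas $H_w$ requires $g|_w=g$ for each $g\in S$, i.e. $\phi_w$ fixing $S$ \emph{pointwise}. The resolution is that $\phi_w|_S$ is a permutation of the finite set $S$, hence has finite order $k$, and $w^k\in\X^+$ gives $\phi_{w^k}|_S=\mathrm{id}_S$, so $S\subseteq H_{w^k}$; thus setwise-invariance along a nontrivial $\Gamma$-cycle is already sufficient, and the converse is immediate from \autoref{p:cyc.subs}. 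Once this is pinned down, the algorithm is: (1) compute $\mathcal{C}$ and the maps $\phi_{\mathsf{x}}$; (2) build $\Gamma$ on $2^{\mathcal{C}}$; (3) compute SCCs; (4) output all $S$ lying in a nontrivial strongly connected component (equivalently, on a $\Gamma$-cycle of length $\ge1$) together, trivially, with all singletons. Steps (2)–(3) are $O(|\X|\cdot 2^{|\mathcal{C}|})$ by standard linear-time SCC algorithms on a graph with that many vertices and edges, and step (1) is negligible, giving the claimed bound. I would also remark that combining \autoref{l:cycle.decide} with \autoref{l:cycle.groups} and \autoref{t:main.result} (via \cite[Corollary~5.8]{SS23}) yields the advertised exponential-time simplicity test: enumerate the maximal such subsets $S$, and for each test whether $\C H_S\cap I^{\mathrm{alg}}_{\mathrm{ess}}(G,\X)\ne 0$ by a finite linear-algebra computation inside the finitely presented quotient.
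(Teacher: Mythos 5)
Your algorithm is essentially the paper's: build the labelled digraph on $2^{\mathcal C}$ induced by the letter action on subsets, observe that containment in some $H_w$, $w\in\X^+$, is equivalent to lying on a nontrivial directed cycle (converting setwise invariance into pointwise fixing by raising $w$ to a suitable power --- the paper uses $w^{|Y|!}$ where you use the order of the induced permutation of $S$), and then find all such vertices with Tarjan's SCC algorithm in time linear in the size of the digraph. One intermediate claim you make is false, however: the $\mathsf x$-labelled edges of $\mathcal H$ restricted to $\mathcal C$ need \emph{not} form a partial permutation, and faithfulness does not give this, since two elements of $\mathcal C$ may agree on the subtree $\mathsf x\X^\omega$ while differing elsewhere. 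The paper's own Grigorchuk example (\autoref{fi:grig}) is a counterexample: $d$ and $e$ both fix $0$ and satisfy $d|_0=e|_0=e$, with $d,e\in\mathcal C=\{e,b,c,d\}$. The paper sidesteps this by building injectivity into the edge definition (an $\mathsf x$-edge from $A$ to $B$ requires $|A|=|B|$ and that the elements of $A$ go to distinct elements of $B$). Your version still works without that, but for a different reason than the one you give: cardinality cannot increase along an edge $S\mapsto\phi_{\mathsf x}(S)$, so around a directed cycle every step is injective on the relevant intermediate set; equivalently, the composite $\phi_w$ maps the finite set $S$ onto $S$ and is therefore a bijection of $S$, which is all your power argument needs. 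So replace the faithfulness justification by this observation (or adopt the paper's edge definition), and the rest of your argument, including the complexity count, matches the paper's proof.
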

\begin{proof}
Define a labeled digraph with vertex set $2^{\mathcal C}$.  There is a directed edge labeled by $\mathsf x\in \X$ from $A$ to $B$ if $|A|=|B|$ and there is an edge labeled by $\mathsf x$ in $\mathcal H$ from each element of $A$ to a distinct element of $B$.  One can construct this digraph in time $O(|\X|\cdot 2^{|\mathcal C|})$ and it has $2^{|\mathcal C|}$ vertices and at most $|X|\cdot 2^{|\mathcal C|}$ edges.   Observe that $w$ labels a closed path at a subset $Y$ if and only if $Y\subseteq H_{w^{|Y|!}}$.  Thus $Y\subseteq H_u$ for some $u\in \X^+$ if and only if $Y$ lies on a directed cycle. The set of such vertices can be found in time linear in the size of the digraph using Tarjan's algorithm.
\end{proof}

Now we show that if the algebraic essential ideal is nonzero, then it contains an element supported on some subgroup $H_w$.

\begin{prop}\label{p:reduce.to.subgroup}
If $I_{\mathrm{ess}}^{\mathrm{alg}}(G,\X)\neq 0$, then there is some $w\in X^+$  and $0\neq a\in \C H_w$ with  $a\in I_{\mathrm{ess}}^{\mathrm{alg}}(G,\X)$.
\end{prop}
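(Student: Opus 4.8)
The plan is to begin with a nonzero element $a\in\C\mathcal N\cap I_{\mathrm{ess}}^{\mathrm{alg}}(G,\X)$ — which exists by the previous proposition together with \cite[Corollary~5.8]{SS23} — and to transform it, staying inside the essential ideal (which is two-sided) and tracking nonvanishing via the $j$-map, into a nonzero element supported on a subgroup $H_w$. The key preliminary reduction is to an element supported on group elements that fix a single word $\rho\in\X^\omega$ and that is nonzero at $\rho$, viewed as a unit of $\mathcal{G}(G,\X)$. Since $a\neq 0$ in $C^*_r(\mathcal{G}(G,\X))$, I would pick $\rho\in\X^\omega$ with $\lambda_\rho(a)\delta_\rho\neq 0$; then $a^*a\in I_{\mathrm{ess}}^{\mathrm{alg}}(G,\X)$ and $(a^*a)(\rho)=\|\lambda_\rho(a)\delta_\rho\|^2>0$. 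As $a^*a$ has finite support in $G$ (contained in $\mathcal N^{-1}\mathcal N$), I would choose a prefix $p$ of $\rho$ long enough that $g|_p\in\mathcal N$ for every $g$ in this support and that, for $h,h'$ ranging over the finite support of $a$, the equality $h(p)=h'(p)$ already forces $h(\rho)=h'(\rho)$. Writing $\rho=p\rho'$ and $a_1:=s_p^*(a^*a)s_p$, one then verifies that $a_1\in\C\mathcal N\cap I_{\mathrm{ess}}^{\mathrm{alg}}(G,\X)$, that every group element in the support of $a_1$ fixes $\rho'$, and that $a_1(\rho')=(a^*a)(\rho)>0$ (a direct $j$-map computation, using that $s_p$ is a partial isometry). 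After replacing $(a,\rho)$ by $(a_1,\rho')$ we may assume $a=\sum_{g\in Z}a_g\mathsf u_g$ with $Z\subseteq\mathcal N$ finite, each $g\in Z$ fixing $\rho$, and $a(\rho)\neq 0$.

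Next I would make $\rho$ eventually periodic and then compress. The restriction tuples $(g|_{\rho^{(n)}})_{g\in Z}$ lie in the finite set $\mathcal N^Z$, so some value recurs; choose $n_0<n_1$ with $(g|_{\rho^{(n_0)}})_{g}=(g|_{\rho^{(n_1)}})_{g}$ and with $n_0$ larger than all of the finitely many lengths at which $g|_{\rho^{(m)}}=g'|_{\rho^{(m)}}$ first occurs, over pairs $g,g'\in Z$. Put $t_0=\rho^{(n_0)}$, $v=\rho^{(n_0+1)}\cdots\rho^{(n_1)}\in\X^+$, and $\tilde\rho=t_0v^\infty$. Because each $g\in Z$ fixes $\rho$ and hence all of its prefixes, and because the restriction tuple recurs at $n_0$ and $n_1$, one gets $g|_{t_0}(v)=v$ and $(g|_{t_0})|_v=g|_{t_0}$, that is, $g|_{t_0}\in H_v$; in particular every $g\in Z$ fixes $\tilde\rho$. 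Since $n_0$ lies past all the agreement lengths, the set of $g\in Z$ acting trivially on a neighbourhood of $\tilde\rho$ equals the set of $g\in Z$ acting trivially on a neighbourhood of $\rho$, and therefore $a(\tilde\rho)=a(\rho)\neq 0$. Finally, as $g(t_0)=t_0$ for all $g\in Z$, we have $a_2:=s_{t_0}^*as_{t_0}=\sum_{g\in Z}a_g\mathsf u_{g|_{t_0}}$, which is supported on $H_v$, hence $a_2\in\C H_v$; it lies in $I_{\mathrm{ess}}^{\mathrm{alg}}(G,\X)$ because this is an ideal, and $a_2(v^\infty)=a(\tilde\rho)\neq 0$ (again a $j$-map computation through $s_{t_0}$), so $a_2\neq 0$. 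Taking $w=v$ gives the conclusion.

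I expect the delicate point to be the periodicization: the level $n_0$ has to be chosen far enough out that all agreement among the finitely many group elements in the support has already stabilized, so that passing from $\rho$ to the eventually periodic word $\tilde\rho$ neither enlarges nor shrinks the equivalence class of the germ at $\rho$, and hence does not destroy the nonvanishing $a(\rho)\neq 0$. Replacing $a$ by $a^*a$ at the very start is what makes the scheme run: it forces the relevant nonzero germ to be an isotropy germ at a point fixed by the entire support, which is precisely the configuration from which one can land inside $\C H_w$.
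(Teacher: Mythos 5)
Your argument is correct in its overall strategy, but it is a genuinely different route from the paper's. The paper stays purely combinatorial/algebraic: it takes a representative $a=\sum_{g\in\mathcal N}a_g\mathsf u_g$ of a nonzero element of $\C\mathcal N\cap I_{\mathrm{ess}}^{\mathrm{alg}}(G,\X)$ with as few nonzero coefficients as possible, and compresses one letter at a time; minimality of the support forces, whenever $s_{\mathsf y}^*as_{\mathsf x}\neq 0$, all supporting elements to send $\mathsf x$ to the same letter and to have pairwise distinct restrictions, so iterating produces infinite words $\xi,\upsilon$ along which the compressions stay nonzero; pigeonhole on the tuple of restrictions gives levels $n<m$ with $g|_{\xi_n}=g|_{\xi_m}$ for all $g$ in the support, and a final left translation by $\mathsf u_h^*$ (for one $h$ in the support) lands the element in $\C H_w$. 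You instead exploit the $*$-structure: replacing $a$ by $a^*a$ and using $(a^*a)(\rho)=\|\lambda_\rho(a)\delta_\rho\|^2>0$ forces the support, after compressing along a long prefix $p$ of $\rho$, to consist of nucleus elements fixing the tail, and you then track nonvanishing through the $j$-map at a unit while you periodicize $\rho$ and compress into $H_v$; no minimal-support trick and no final translation are needed. What each buys: the paper's argument works verbatim over any field (which matters for the later algorithmic statements about arbitrary characteristics), while yours is tied to $\C$ through positivity; conversely, your use of positivity at a unit germ removes the delicate bookkeeping the paper needs to certify that each compression remains nonzero.

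One step needs a small repair. To get $a(\tilde\rho)=a(\rho)$ you require $n_0$ to exceed the first level at which $g$ and $g'$ have equal restrictions along $\rho$, for pairs $g,g'\in Z$, and you then assert that the set of $g\in Z$ with trivial germ at $\tilde\rho$ equals the set with trivial germ at $\rho$. But the value of $a$ at the unit $[1,\rho]$ is $\sum a_g$ over those $g\in Z$ whose restriction to some finite prefix of $\rho$ equals $e$; germ-triviality is agreement with the identity, not agreement between two elements of $Z$, and $e$ need not lie in $Z$ (the positive coefficient of $\mathsf u_e$ in $a^*a$ can be cancelled when you compress by $s_p$). So the pairwise condition alone does not justify the claimed equality of germ-trivial sets. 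The fix is immediate: also require $n_0$ to exceed, for each $g\in Z$ whose restriction along $\rho$ eventually becomes $e$, the first level at which this happens (finitely many further constraints). With that choice, trivial germ at $\rho$ is equivalent to $g|_{t_0}=e$, and since $g|_{t_0}\in H_v$ this is exactly trivial germ at $\tilde\rho=t_0v^\infty$, so $a(\tilde\rho)=a(\rho)\neq 0$ and the rest of your argument goes through.
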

\begin{proof}
It is known~\cite[Proposition~5.7]{SS23} that if $I_{\mathrm{ess}}^{\mathrm{alg}}(G,\X)\neq 0$, then there is a nonzero element $a\in \C\mathcal N\cap I_{\mathrm{ess}}^{\mathrm{alg}}(G,\X)$.  Without loss of generality we may assume that $a=\sum_{g\in \mathcal N}a_g\mathsf{u}_g$  with as few $a_g\neq 0$ as possible.  Then $s_{\mathsf y}^*as_{\mathsf x}\in \C\mathcal N$ and represents an element of $I_{\mathrm{ess}}^{\mathrm{alg}}(G,\X)$ for all $\mathsf x,\mathsf y\in \X$, and in $\C(G,\X)$ we have $a=\sum_{\mathsf x,\mathsf y\in \X}s_{\mathsf y}(s_{\mathsf y}^*as_{\mathsf x})s_{\mathsf x^*}$.  Moreover, if  $s_{\mathsf y}^*as_{\mathsf x}\neq 0$ in $\C(G,X)$, then we must have that $g|_{\mathsf x}\neq h|_{\mathsf x}$ and $g(\mathsf x)=\mathsf y=h(\mathsf x)$ whenever  $a_g\neq 0\neq a_h$ by choice of $a$.  Since $a\neq 0$ in $\C(G,\X)$, we can find $\mathsf x_1,\mathsf y_1\in \X$ with $s_{\mathsf y_1}^*as_{\mathsf x_1}\neq 0$ and $\mathsf y_1=g(\mathsf x_1)$ for all $g$ with $a_g\neq 0$, and this element meets our minimality condition on nonzero coefficients.  Continuing in this fashion we can find infinite sequences $\xi=\mathsf x_1\mathsf x_2\cdots$ and $\upsilon=\mathsf y_1\mathsf y_2\cdots$ such that if $\xi_n = \mathsf x_1\cdots \mathsf x_n$ and $\upsilon_n=\mathsf y_1\cdots \mathsf y_n$, then $s_{\upsilon_n}^*as_{\xi_n}\neq 0$ in $\C(G,\X)$, $g|_{\xi_n}\neq h|_{\xi_n}$ whenever $g\neq h$ with $a_g\neq 0\neq a_h$ and $g(\xi_n)=\upsilon_n$ whenever $a_g\neq 0$.  Note that \[s_{\upsilon_n}^*as_{\xi_n} = \sum_{g\in \mathcal N}a_g\mathsf u_{g|_{\xi_n}}.\] Since $\mathcal N$ is finite, we can find  $n<m$ such that, for all $g\in \mathcal N$ with $a_g\neq 0$, $g|_{\xi_n} = g|_{\xi_m}$.  Let $w=\mathsf x_{n+1}\cdots \mathsf x_m$, so that $\xi_nw=\xi_m$.  Then $(g|_{\xi_n})|_w = g|_{\xi_n}$, $g(\xi_nw) =\upsilon_m=h(\xi_nw)$  and $g|_{\xi_n}\neq h|_{\xi_n}$ whenever $a_g\neq 0\neq a_h$ and $g\neq h$.  Replacing $a$ by $s_{\upsilon_n}^*as_{\xi_n}$, we may then assume that $g(w) = h(w)$ and $g|_w=g$ whenever $a_g\neq 0\neq a_h$.  Fix $h\in \mathcal N$ with $a_h\neq 0$.  Then $0\neq u_h^* a\in I_{\mathrm{ess}}^{\mathrm{alg}}(G,\X)$.  Moreover, if $a_g\neq 0$, then $h^{-1} g(w) = w$ and $(h^{-1} g)|_w = (h^{-1})|_{g(w)}g|_w = (h^{-1})|_{h(w)}g = h^{-1} g$, as $h^{-1} |_{h(w)} = (h|_w)^{-1}$.  Thus $h^{-1}g\in H_w$, and so $0\neq  u_h^*a\in \C H_w\cap I_{\mathrm{ess}}^{\mathrm{alg}}(G,\X)$.
\end{proof}

Let us now describe the algorithm from~\cite{SS23} which determines if $a\in \C\mathcal N$ represents an element of $I_{\mathrm{ess}}^{\mathrm{alg}}(G,\X)$.
 The free monoid $\X^*$ acts on the set of equivalence relations on $\mathcal N$ by the rule $a\mathrel{(\mathsf{x}\cdot {\equiv})} b$ if and only if $a(\mathsf x)=b(\mathsf x)$ and $a|_{\mathsf x}\equiv b|_{\mathsf x}$.    Let ${\equiv_w} = (w\cdot{=})$ for $w\in \X^*$, and let $\Gamma$ be the labeled Schreier digraph of the left action of $\X^*$ on  the finite set $V=\{\equiv_w\mid w\in \X^*\}$.  It is shown in~\cite[Proposition~5.10]{SS23} that $a\equiv_w b$ if and only if $\mathsf u_as_w=\mathsf u_bs_w$ in $S(G,\X)$. 

Call a vertex $v\in V$ \emph{essential} if it can be reached from a cycle in $\Gamma$ and \emph{minimal} if the strongly connected component of $v$ has no exit.  Every minimal vertex is essential, but the converse need not hold.  It turns out that $\Gamma$ has exactly one minimal strongly connected component~\cite[Corollary~5.12]{SS23}. If $v\in V$ corresponds to the equivalence relation $\equiv$, then we say that $a\in \mathbb C\mathcal N$ satisfies the equations of $v$ if $a$ maps to zero under the projection $\mathbb C\mathcal N\to \mathbb C[\mathcal N/{\equiv}]$.  Note that $a$ satisfies the equations of $\equiv_w$ if and only if $as_w=0$ in the inverse semigroup algebra $\C S(G,\X)$~\cite[Lemma~5.13]{SS23}. 

It follows from~\cite[Proposition~5.15]{SS23} that $a$ represents a nonzero element of $\C(G,\X)$ if and only if $a$ fails to satisfy the equations of some essential vertex.  On the other hand~\cite[Theorem~5.14]{SS23} says that $a$ belongs to $I_{\mathrm{ess}}^{\mathrm{alg}}(G,\X)$ if and only if it satisfies the equations of every minimal vertex.  Thus $\C(G,\X)$ is simple if and only if the rank of the integer matrix obtained from putting together all the equations of minimal vertices is the same as the rank of the matrix obtained from putting together all the equations of essential vertices.

It turns out, however, that we can replace $\Gamma$ by a quotient labeled digraph with many fewer vertices and that is easier to understand conceptually.

We say that $g\in G$ \emph{strongly fixes} $w\in \X^*$ if $g(w)=w$ and $g|_w=e$, or equivalently $\mathsf u_gs_w = s_w$. Thus the set $G_{(w)}$ of elements  strongly fixing $w$ is a subgroup of $G$ (in fact, it is the pointwise stabilizer of $w\X^{\omega}$).    Define an action of $\X^*$ on the power set of $\mathcal N$ by $\mathsf x\cdot Y =\{g\in \mathcal N\mid g(\mathsf x)=\mathsf x, g|_{\mathsf x}\in Y\}$ for $\mathsf x\in \X$.    Let $Q=\X^*\cdot \{e\}$ and put $\mathcal N_w = w\cdot \{e\}$.  Note that $Q$ is finite.   Let $\Delta$ be the directed labeled Schreier graph of the left action of $\X^*$ on $Q$.  A vertex will again be called minimal if it belongs to a strongly connected component with no exit.

\begin{prop}\label{p:meaning.of.Q}
For all $w\in \X^*$, one has $\mathcal N_w=\mathcal N\cap G_{(w)}$.  Moreover, $\mathcal N_w$ is the equivalence class of $e$ in $\equiv_w$.
\end{prop}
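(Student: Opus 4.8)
The plan is to establish both assertions by induction on the length of $w$, stripping off the first letter of $w$ using the self-similarity identities $g(\mathsf{x}w') = g(\mathsf{x})\,g|_{\mathsf{x}}(w')$ and $g|_{\mathsf{x}w'} = (g|_{\mathsf{x}})|_{w'}$.

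First I would prove that $\mathcal{N}_w = \mathcal{N}\cap G_{(w)}$. For $w = \emptyset$ both sides equal $\{e\}$: the empty word acts as the identity on subsets of $\mathcal{N}$, so $\mathcal{N}_\emptyset = \{e\}$ (here $e\in\mathcal{N}$, since $e|_{\X^m} = \{e\}$ for all $m$), while a group element strongly fixes $\emptyset$ exactly when it equals $e$, as $g|_\emptyset = g$. For the inductive step write $w = \mathsf{x}w'$ with $\mathsf{x}\in\X$. From the two identities above, $g$ strongly fixes $\mathsf{x}w'$ if and only if $g(\mathsf{x}) = \mathsf{x}$ and $g|_{\mathsf{x}}$ strongly fixes $w'$, so $G_{(\mathsf{x}w')} = \{g\in G \mid g(\mathsf{x}) = \mathsf{x},\ g|_{\mathsf{x}}\in G_{(w')}\}$. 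Intersecting with $\mathcal{N}$ and using that $\mathcal{N}$ is closed under $g\mapsto g|_{\mathsf{x}}$ — so that for $g\in\mathcal{N}$ the conditions $g|_{\mathsf{x}}\in G_{(w')}$ and $g|_{\mathsf{x}}\in\mathcal{N}\cap G_{(w')}$ are equivalent — the inductive hypothesis turns this into $\mathcal{N}\cap G_{(\mathsf{x}w')} = \{g\in\mathcal{N}\mid g(\mathsf{x}) = \mathsf{x},\ g|_{\mathsf{x}}\in\mathcal{N}_{w'}\}$, which is exactly $\mathsf{x}\cdot\mathcal{N}_{w'} = \mathcal{N}_{\mathsf{x}w'}$ by the definition of the action of $\X^*$ on the power set of $\mathcal{N}$.

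For the second assertion I would use \cite[Proposition~5.10]{SS23}, which states that for $a,b\in\mathcal{N}$ one has $a\equiv_w b$ if and only if $\mathsf{u}_a s_w = \mathsf{u}_b s_w$ in $S(G,\X)$. Taking $b = e$ and recalling that $\mathsf{u}_g s_w = s_w$ is by definition the statement that $g$ strongly fixes $w$, we obtain that for $g\in\mathcal{N}$ the relation $g\equiv_w e$ holds if and only if $g\in G_{(w)}$, equivalently $g\in\mathcal{N}\cap G_{(w)} = \mathcal{N}_w$ by the first part; hence $\mathcal{N}_w$ is the $\equiv_w$-class of $e$. Alternatively one can avoid the citation and run the same letter-by-letter induction directly on $\equiv_w$, using that $g\equiv_{\mathsf{x}w'}e$ if and only if $g(\mathsf{x}) = \mathsf{x}$ and $g|_{\mathsf{x}}\equiv_{w'}e$.

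I do not anticipate a genuine obstacle; the argument is essentially bookkeeping with the self-similarity formulas. The only points requiring care are that $e\in\mathcal{N}$, that $\mathcal{N}$ is closed under restriction (used in the inductive step), and that the two free-monoid actions involved — on subsets of $\mathcal{N}$ and on equivalence relations on $\mathcal{N}$ — genuinely decompose according to the first letter, which is forced by the identity $\mathsf{u}_a s_{\mathsf{x}w'} = s_{a(\mathsf{x})}\,\mathsf{u}_{a|_{\mathsf{x}}}\,s_{w'}$ in $S(G,\X)$.
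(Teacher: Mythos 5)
Your proposal is correct and follows essentially the same route as the paper: induction on the length of $w$, peeling off the first letter via the self-similarity identities for the first claim, and invoking \cite[Proposition~5.10]{SS23} (i.e., $g\equiv_w e \Leftrightarrow \mathsf{u}_g s_w=s_w$) for the second. Your explicit remark that closure of $\mathcal N$ under restriction is what makes the intersection with $\mathcal N$ commute with the inductive step is a detail the paper leaves implicit, but the arguments are the same.
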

\begin{proof}
We proceed by induction on length.  If $|w|=0$, then since the action of $G$ is faithful, $\mathcal N_w=\{e\}=\mathcal N\cap G_{(w)}$.  Suppose the claim is true for $v$ of length $n$ and $w=\mathsf xv$ with $\mathsf x\in \X$.  Then $g\in \mathcal N_w$ if and only if $g(\mathsf x)=\mathsf x$ and $g|_{\mathsf x}\in \mathcal N_v$.  By induction this occurs if and only if $g(\mathsf x)=\mathsf x$, $g|_{\mathsf x}(v)=v$ and $(g|_{\mathsf x})|_v =e$, that is, $g\in G_{(w)}$.  This proves the first item. For the second, $g\equiv_w e$ if and only if $\mathsf u_gs_w=s_w$, that is, $g\in G_{(w)}$.
\end{proof}

It follows from  \autoref{p:meaning.of.Q} that we have a surjective label-preserving morphism of digraphs $\Phi\colon \Gamma\to \Delta$ given sending an equivalence class $\equiv$ to the $\equiv$-class of $e$.

The following result improves upon the main result of~\cite{SS23}.

\begin{thm}\label{t:simpl.main}
The following are equivalent for a contracting group $(G,\X)$.
\begin{enumerate}
  \item $\C(G,\X)$ is simple.
  \item $\Oo(G,\X)$ is simple.
  \item If $w\in X^+$ and $a\in \mathbb CH_w$ maps to $0$ under the projection $\mathbb CH_w\to \mathbb C[H_w/(H_w\cap Y)]$ for every minimal vertex $Y$ of $\Delta$ (note that $H_w\cap Y$ is a subgroup), then $a=0$ in $\C H_w$.
\end{enumerate}
Moreover, it is decidable in time exponential in the size of the nucleus whether these equivalent conditions hold, given the nucleus as input.
\end{thm}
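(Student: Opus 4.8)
The plan is to assemble the ingredients already in place. The equivalence of (1) and (2) is exactly \autoref{t:main.result}. For the equivalence with (3), recall that minimality and effectiveness of $\mathcal{G}(G,\X)$ give that $\C(G,\X)$ is simple iff $I_{\mathrm{ess}}^{\mathrm{alg}}(G,\X)=0$. By \autoref{p:reduce.to.subgroup}, together with \autoref{l:cycle.groups} (a nonzero element of $\C H_w$ stays nonzero in $\C(G,\X)$), one has $I_{\mathrm{ess}}^{\mathrm{alg}}(G,\X)\neq 0$ if and only if $\C H_w\cap I_{\mathrm{ess}}^{\mathrm{alg}}(G,\X)\neq 0$ for some $w\in\X^+$. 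So it remains to prove that, for $w\in\X^+$ and $a\in\C H_w$, membership $a\in I_{\mathrm{ess}}^{\mathrm{alg}}(G,\X)$ is equivalent to $a$ mapping to $0$ under the projection $\C H_w\to\C[H_w/(H_w\cap Y)]$ for every minimal vertex $Y$ of $\Delta$; granting this, condition (3) is precisely the assertion that $\C H_w\cap I_{\mathrm{ess}}^{\mathrm{alg}}(G,\X)=0$ for all $w\in\X^+$, which is what we want.

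To prove that last equivalence I would use \cite[Theorem~5.14]{SS23}: for $a\in\C\mathcal{N}$, one has $a\in I_{\mathrm{ess}}^{\mathrm{alg}}(G,\X)$ iff the image of $a$ in $\C[\mathcal{N}/{\equiv_u}]$ vanishes for every minimal vertex $\equiv_u$ of $\Gamma$. The key point is that, when $a$ is supported on the subgroup $H_w$, this condition depends only on $\Phi(\equiv_u)$, i.e.\ on the $\equiv_u$-class of $e$. Indeed $g\equiv_u h$ iff $g^{-1}h\in G_{(u)}$, and since $H_w\subseteq\mathcal{N}$ by \autoref{p:cyc.subs} one gets $H_w\cap G_{(u)}=H_w\cap\mathcal{N}_u$ from \autoref{p:meaning.of.Q}; hence the nonempty intersections of $H_w$ with the $\equiv_u$-classes are exactly the left cosets in $H_w$ of the subgroup $H_w\cap\mathcal{N}_u=H_w\cap\Phi(\equiv_u)$, so $a$ maps to $0$ in $\C[\mathcal{N}/{\equiv_u}]$ iff it maps to $0$ in $\C[H_w/(H_w\cap\Phi(\equiv_u))]$. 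It then remains to match minimal vertices: since $\Gamma$ and $\Delta$ are deterministic and complete over $\X$ and $\Phi$ is a surjective label-preserving digraph morphism, $\Phi$ carries the (unique, by \cite[Corollary~5.12]{SS23}) minimal strongly connected component of $\Gamma$ onto a minimal strongly connected component of $\Delta$, and by surjectivity of $\Phi$ this is the only one; hence $\{\Phi(\equiv_u)\colon \equiv_u\text{ minimal in }\Gamma\}$ is exactly the set of minimal vertices of $\Delta$. Combining these three observations finishes the equivalence.

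For decidability I would proceed as follows. First build $\Delta$ by breadth-first search from $\{e\}\in 2^{\mathcal{N}}$ under the action $\mathsf{x}\cdot Y=\{g\in\mathcal{N}\colon g(\mathsf{x})=\mathsf{x},\ g|_{\mathsf{x}}\in Y\}$; it has at most $2^{|\mathcal{N}|}$ vertices, is built in time $2^{O(|\mathcal{N}|)}$ up to polynomial factors, and its minimal vertices are located by Tarjan's algorithm. Next, using the digraph on $2^{\mathcal{C}}$ from the proof of \autoref{l:cycle.decide}, extract the subsets of $\mathcal{C}$ lying on a directed cycle and keep the inclusion-maximal ones; each such maximal cyclic subset $Y$ satisfies $Y\subseteq H_{w^{|Y|!}}$ for a suitable cycle label $w$ (the observation in that proof), and $H_{w^{|Y|!}}$ is itself a cyclic subset, so by maximality $Y=H_{w^{|Y|!}}$ is a subgroup; conversely every $H_w$ lies in a maximal cyclic subset $H_{w'}$, and $\C H_w\cap I_{\mathrm{ess}}^{\mathrm{alg}}(G,\X)\subseteq\C H_{w'}\cap I_{\mathrm{ess}}^{\mathrm{alg}}(G,\X)$. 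Hence it suffices to test condition (3) on the finitely many maximal cyclic subgroups $Y$. For each such $Y$ I would compute, for each minimal vertex $Z$ of $\Delta$, the subgroup $Y\cap Z$ and the partition of $Y$ into its cosets (a routine finite computation with the nucleus automaton); condition (3) for $Y$ then asserts that the $\{0,1\}$-matrix recording membership of the elements of $Y$ in these cosets, with one block per minimal $Z$, has rank $|Y|$ over $\Q$. This is linear algebra in a matrix of size at most $2^{|\mathcal{N}|}\cdot|\mathcal{N}|$, so the whole procedure runs in time $2^{O(|\mathcal{N}|)}$ up to polynomial factors in the input size.

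I expect the main obstacle to be the second paragraph: showing that restricting the essential-ideal equations of \cite{SS23} to an element supported on a single subgroup $H_w$ makes them factor through the much smaller automaton $\Delta$. This coset computation, together with the transfer of minimal strongly connected components along $\Phi$, is precisely what replaces the Bell-number-many equivalence relations of \cite{SS23} by the $2^{|\mathcal{N}|}$-many subsets tracked here, and it is where the genuine content of the improvement lies.
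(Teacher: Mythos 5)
Your proposal is correct and follows essentially the same route as the paper: the equivalence of (1) and (2) via \autoref{t:main.result}, the reduction to elements of $\C H_w$ via \autoref{p:reduce.to.subgroup} and \autoref{l:cycle.groups}, the translation of the minimal-vertex equations of $\Gamma$ into coset conditions for $H_w\cap \mathcal N_v$ through $\Phi$ and \autoref{p:meaning.of.Q}, and the exponential-time check via $\Delta$, \autoref{l:cycle.decide} and integer linear algebra. The only differences are cosmetic: you match minimal strongly connected components of $\Gamma$ and $\Delta$ by a general automaton argument using the uniqueness result of \cite[Corollary~5.12]{SS23} where the paper chases explicit words, and you spell out details (maximal cyclic subsets are subgroups of the form $H_w$, and checking maximal subgroups suffices) that the paper only asserts.
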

\begin{proof}
We have already seen the equivalence of (1) and (2) in  \autoref{t:main.result}.  For the equivalence of (1) and (3), we first observe that $Y$ is a minimal vertex of $\Delta$ if and only if $Y=\Phi({\equiv})$ with $\equiv$ a minimal vertex of $\Gamma$. Indeed, suppose that $\equiv_z$ is minimal.  We must show that $\Phi({\equiv}_z)=\mathcal N_z$ (by \autoref{p:meaning.of.Q}) is minimal.   Indeed, for any $u\in \X^*$, there exists $v\in \X^*$ with $(vu\cdot {\equiv_z}) = {\equiv_z}$, and so $v(u \mathcal N_z)=\mathcal N_z$, hence $\mathcal N_z$ is minimal.  Conversely, suppose that $\mathcal N_z$ is minimal.  We can find $u\in \X^*$ such that ${(u\cdot {\equiv_z})} = {\equiv_{uz}}$ is minimal.  Then since $\mathcal N_z$ is minimal, we can find $r\in \X^*$ with $ru\mathcal N_z=\mathcal N_z$.  Then $(r\cdot {\equiv_{uz}}) = {\equiv_{ruz}}$ is minimal and $\mathcal N_z = \Phi({\equiv_{ruz}})$.

Suppose that $H\subseteq \mathcal C$ is a subgroup.  Then $H\cap \mathcal N_v = H\cap G_{(v)}$ is a subgroup of $H$ by \autoref{p:meaning.of.Q}.  Moreover, if $g,h\in H$, then we claim that $g\equiv_v h$ if and only if $g(H\cap \mathcal N_v)=h(H\cap \mathcal N_v)$.  Indeed,  $g\equiv_v h$ if and only if $\mathsf u_gs_v=\mathsf u_hs_v$, if and only if $\mathsf u_{h^{-1} g}s_v=s_v$, if and only if $h^{-1} g\equiv_v e$, that is, $h^{-1} g\in H\cap \mathcal N_v$ by \autoref{p:meaning.of.Q}. It follows that $a\in \C H$ satisfies the equations of $\equiv_v$ if and only if $a$ maps to $0$ under the projection $\C H\to \C[H/(H\cap \mathcal N_v)]$.

Suppose first that (3) holds and $0\neq I_{\mathrm{ess}}^{\mathrm{alg}}(G,\X)$.  Then by~\autoref{p:reduce.to.subgroup}, there is some $w\in X^+$  with $0\neq a\in \mathbb CH_w\cap I_{\mathrm{ess}}^{\mathrm{alg}}(G,\X)$.  Then $a$ satisfies the equations of every minimal vertex  of $\Gamma$. By the above discussion, any minimal vertex $Y$ of $\Delta$ is of the form $\mathcal N_v$ with $\equiv_v$ a minimal vertex of $\Gamma$. Since $a$ satisfies the equations of $\equiv_v$, it follows that $a$ maps to $0$ under the projection $\mathbb CH\to \mathbb C[H/(H_w\cap \mathcal N_v)]$ by the previous paragraph.  We conclude that $a=0$ by (3), a contradiction.

Next suppose that (1) holds.   By the above discussion, $a\in \C H_w$ maps to $0$ under the projection $\mathbb CH_w\to \mathbb C[H_w/(Y\cap H_w)]$ for every minimal vertex $Y$ of $\Delta$ if and only if it satisfies the equations of every minimal vertex of $\Gamma$, if and only if $a$ represents an element of $I_{\mathrm{ess}}^{\mathrm{alg}}(G,\X)$.  By  \autoref{l:cycle.groups}, if $a\neq 0$, then it represents a nonzero element of $\C(G,\X)$.  It follows from (1), that any such $a$ must be $0$ in $\C H_w$.

The decidability statement follows because $\Delta$ can be computed in exponential time in the size of the nucleus, and \autoref{l:cycle.decide} implies that we can compute in exponential time in the size of the nucleus the maximal subgroups of the form $H_w$ with $w\in X^+$, and one need only check that (3) holds for all maximal subgroups of this form.  Checking if (3) holds for this finite collection of subgroups amounts to verifying if a homogeneous system of integral equations has a nontrivial solution over $\mathbb C$.
\end{proof}

The criterion for simplicity of $\mathbb C(G,\X)$ in (3) of Theorem~\ref{t:simpl.main} is valid for simplicity of $K\mathcal G(G,\X)$ over any field $K$ since the quoted results of~\cite{SS23} hold over arbitrary fields.  In particular, one can compute in exponential time the set of all characteristics of fields for which $K\mathcal G(G,\X)$ is not simple.  Namely, checking if (3) holds for the finitely many maximal subgroups of the form $H_w$ amounts to verifying if a homogeneous system of integral equations has a nontrivial solution over $K$.  This depends only on the characteristic of $K$.  If there is a nontrivial solution over $\mathbb Q$, then since reducing modulo $p$ does not increase the rank, there is a nontrivial solution over every field.  If there is no nontrivial solution over $\mathbb Q$, then one can place the matrix in Smith normal form and if $d_1\mid \cdots \mid d_k$ are the diagonal entries, then the algebra is not simple only over fields of characteristic dividing $d_k$ (in particular, if $d_k=1$, the algebra is simple over all fields).

  It is observed in~\cite[Proposition~3.2]{SS23} that $\mathcal G(G,\X)$ is Hausdorff if and only if no nontrivial element of $\mathcal C$ strongly fixes any word (that is, no cycle in $\mathcal H$ can reach $e$ except the loops at $e$).  It follows that in the Hausdorff case $H_w\cap \mathcal N_v=\{e\}$ for any words $v,w$, and so (3) is always satisfied.

Let us demonstrate how this works on two well-known examples.  The Grigorchuk group $G$ is the self-similar group over $\X=\{0,1\}$ generated by $a,b,c,d$ with
\[\begin{array}{ll}
a(0w) = 1w, &  a(1w)=0w\cr b(0w) =0a(w), &  b(1w) =1c(w)\cr  c(0w) = 0a(w), & c(1w) = 1d(w),\cr d(0w) = 0w &  d(1w) = 1b(w).
\end{array}\]
  Then $a,b,c,d$ have order $2$ and $b,c,d,e$ form a Klein $4$-group.  The group is contracting and the nucleus is $\{e,a,b,c,d\}$.  See~\cite{selfsimilar}, for example.  The automaton $\mathcal H$ and  the graph $\Delta$ are in \autoref{fi:grig}.
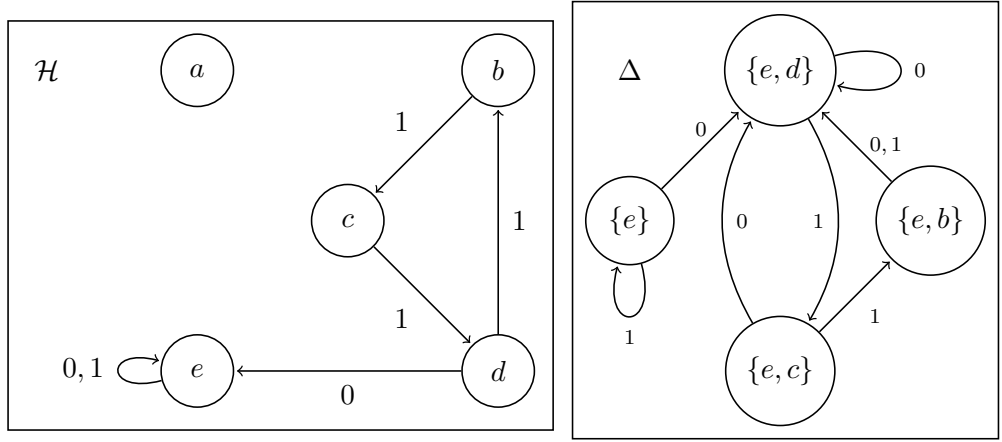
\begin{figure}[tbph]
\begin{center}
\begin{tikzpicture}[->,shorten >=1pt,%
auto,node distance=2cm,semithick,
inner sep=5pt,bend angle=30]
\begin{scope}[scale=.75, xshift=-7cm,draw, local bounding box=bounding box 0]
\node[state] (A) {$a$};
\node        (Q) [left of =A] {$\mathcal H$};
\node        (F) [right of=A]  {};
\node        (G) [below of=A]  {};
\node[state] (B) [right of=F] {$b$};
\node        (H) [below of=B]  {};
\node[state] (C) [below of=F] {$c$};
\node[state] (D) [below of=H] {$d$};
\node[state] (E) [below of=G] {$e$};
\path (B) edge  node [above left]  {$1$} (C)
      (C) edge  node [below left] {$1$} (D)
      (D) edge  node [right]  {$1$} (B)
      (D) edge  node [below]  {$0$} (E)
      (E) edge [loop left] node [left] {$0,1$} (E);
\end{scope}
\node [draw, fit=(bounding box 0)] {};
\begin{scope}[scale=.5, xshift=5cm,draw, local bounding box=bounding box 1]
\node[state] (B) {$\{e,d\}$};
\node        (G) [below of =B] {};
\node[state] (A) [left of =G]{$\{e\}$};
\node[state] (C) [below of =G] {$\{e,c\}$};
\node[state] (D) [right of=G] {$\{e,b\}$};
\node        (H) [left of =B] {$\Delta$};
\tikzstyle{every node}=[font=\scriptsize]
\path (A) edge  node [above]      {$0$} (B)
      (B) edge  [bend left]  node [left]  {$1$} (C)
      (B) edge [loop right] node [right] {$0$} (B)
      (C) edge  node [below right] {$1$} (D)
      (C) edge  [bend left] node [right] {$0$} (B)
      (D) edge  node [right]  {$0,1$} (B)
      (A) edge [loop below] node [below] {$1$} (A);
\end{scope}
\node [draw, fit=(bounding box 1)] {};
\end{tikzpicture}
\end{center}
\caption{$\mathcal H$ and $\Delta$ for the Grigorchuk group~\label{fi:grig}}
\end{figure}
Thus we see that $\mathcal C= \{e,b,c,d\}$ and that $H_{111} = \{e,b,c,d\}$.  All vertices of $\Delta$ except $\{e\}$ are minimal.  Notice that every irreducible representation of the Klein $4$-group over $K$ for a field of characteristic different than $2$ is one-dimensional and contains at least one of the three subgroups $\{e,b\}$, $\{e,c\}$ and $\{e,d\}$ in its kernel, as a subgroup of the multiplicative group of a field is cyclic.  Thus if $f\in KH_{111}$ projects to $0$ under all of the projections $K[H_{111}/N]$ with $N$ a subgroup of order $2$, then $f$ is annihilated by all irreducible representations of $H_{111}$ over $K$, and hence is $0$ as $K$ does not have characteristic $2$.  Thus $K\mathcal G(G,\X)$ is simple for any field of characteristic different than $2$ and $\Oo(G,\X)$ is simple.  On the other hand, if $K$ has characteristic $2$, then $e+b+c+d$ is annihilated under all these projections, and so $K\mathcal G(G,\X)$ is not simple in this case.  This result can be found in~\cite{nonhausdorffsimple} (see also~\cite{Nekrashevychgpd} and~\cite{SS23}).

Let us consider the Grigorchuk-Erschler group.  This group $G$ is defined over $\X=\{0,1\}$ and
generated by $a,b,c,d$ with
\[\begin{array}{ll}
a(0w) = 1w, &  a(1w)=0w\cr b(0w) =0a(w), &  b(1w) =1b(w)\cr  c(0w) = 0a(w), & c(1w) = 1d(w),\cr d(0w) = 0w &  d(1w) = 1c(w).
\end{array}\]
  Then $a,b,c,d$ have order $2$ and $b,c,d,e$ form a Klein $4$-group.  The group is contracting and the nucleus is $\{e,a,b,c,d\}$.   The automaton $\mathcal H$ and  the graph $\Delta$ are in \autoref{fi:grigersch}.
\begin{figure}[tbph]
\begin{center}
\begin{tikzpicture}[->,shorten >=1pt,%
auto,node distance=2cm,semithick,
inner sep=5pt,bend angle=30]
\begin{scope}[scale=.75, xshift=-7cm, draw, local bounding box=bounding box 0]
\node[state] (A) {$a$};
\node        (Q) [left of =A] {$\mathcal H$};
\node        (F) [right of=A]  {};
\node        (G) [below of=A]  {};
\node[state] (B) [right of=F] {$b$};
\node        (H) [below of=B]  {};
\node[state] (C) [below of=F] {$c$};
\node[state] (D) [below of=H] {$d$};
\node[state] (E) [below of=G] {$e$};
\path (B) edge [loop left] node [left]  {$1$} (B)
      (C) edge [bend left]  node [right] {$1$} (D)
      (D) edge [bend left] node [left]  {$1$} (C)
      (D) edge  node [below]  {$0$} (E)
      (E) edge [loop left] node [left] {$0,1$} (E);
\end{scope}
\node [draw, fit=(bounding box 0)] {};
\begin{scope}[scale=.5, xshift=5cm,draw, local bounding box=bounding box 1]
\node[state] (B) {$\{e,d\}$};
\node        (G) [below of =B] {};
\node[state] (A) [left of =G]{$\{e\}$};
\node[state] (C) [below of =G] {$\{e,c\}$};
\node        (H) [left of =B] {$\Delta$};
\tikzstyle{every node}=[font=\scriptsize]
\path (A) edge  node [above]      {$0$} (B)
      (B) edge  [bend left]  node [right]  {$1$} (C)
      (B) edge [loop right] node [right] {$0$} (B)
      (C) edge  [bend left] node [right] {$0,1$} (B)
      (A) edge [loop below] node [below] {$1$} (A);
\end{scope}
\node [draw, fit=(bounding box 1)] {};
\end{tikzpicture}
\end{center}
\caption{$\mathcal H$ and $\Delta$ for the Grigorchuk-Erschler group~\label{fi:grigersch}}
\end{figure}
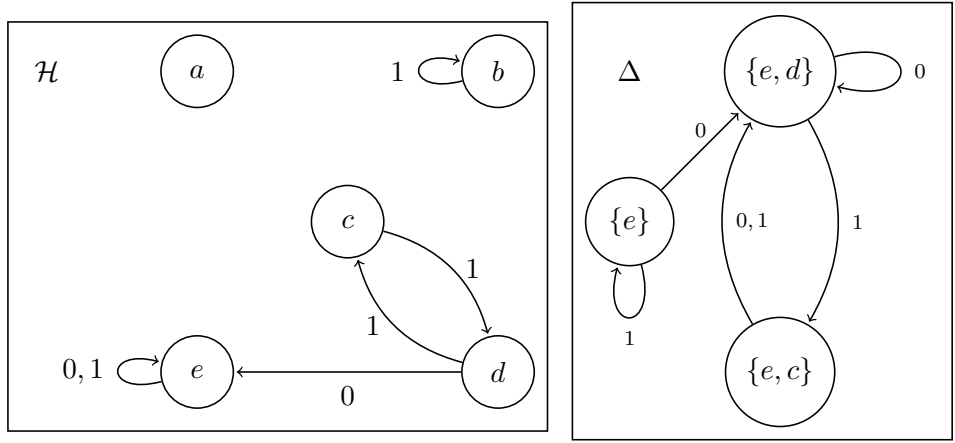
So $\mathcal C=\{e,b,c,d\}$ and $H_{11}=\{e,b,c,d\}$.  All vertices of $\Delta$ are minimal except $\{e\}$.  The element $e+b-c-d$ maps to $0$ under the projections $KH_{11}\to K[H_{11}/\langle c\rangle]$ and $KH_{11}\to K[H_{11}/\langle d\rangle]$ over any field, and so is a nontrivial element of the essential ideal.  Thus $K\mathcal G(G,\X)$ is not simple over any field and $\Oo(G,\X)$ is not simple.  This result was first proved by the first author (unpublished) and written up in~\cite{SS23}.

\newcommand{\etalchar}[1]{$^{#1}$}


\end{document}